\documentclass[reqno, 10pt]{amsart}
\usepackage{amssymb}
\usepackage{amsmath}
\usepackage[mathscr]{euscript}
\usepackage{hyperref}
\usepackage{graphicx}
\usepackage[normalem]{ulem}

\usepackage{amssymb}
%% Hyperlinks setup (by L. Flaminio?)
\usepackage{hyperref}
\RequirePackage[dvipsnames]{xcolor} % [dvipsnames]
\definecolor{halfgray}{gray}{0.55} 
% chapter numbers will be semi transparent .5 .55 .6 .0
\definecolor{webgreen}{rgb}{0,0.5,0}
\definecolor{webbrown}{rgb}{.6,0,0} \hypersetup{%
  colorlinks=true, linktocpage=true, pdfstartpage=3,
  pdfstartview=FitV,%
  breaklinks=true, pdfpagemode=UseNone, pageanchor=true,
  pdfpagemode=UseOutlines,%
  plainpages=false, bookmarksnumbered, bookmarksopen=true,
  bookmarksopenlevel=1,%
  hypertexnames=true,
  pdfhighlight=/O,%hyperfootnotes=true,%nesting=true,%frenchlinks,%
  urlcolor=webbrown, linkcolor=RoyalBlue,
  citecolor=webgreen, %pagecolor=RoyalBlue,%
  % uncomment the following line if you want to have black links
  % (e.g., for printing)
  % urlcolor=Black, linkcolor=Black,
  % citecolor=Black, %pagecolor=Black,%
  pdftitle={},%
  pdfauthor={},%
  pdfsubject={2000 MAthematical Subject Classification: Primary:},%
  pdfkeywords={},%
  pdfcreator={pdfLaTeX},%
  pdfproducer={LaTeX with hyperref}%
}

\usepackage{enumitem}

\newtheorem{theorem}{Theorem}[section]
\newtheorem{lemma}[theorem]{Lemma}
\newtheorem{corollary}[theorem]{Corollary}
\newtheorem{proposition}[theorem]{Proposition}

\newtheorem{example}[theorem]{Example}

\def\R{\mathbb{R}}
\def\N{\mathbb{N}}
\def\Reg{\mathcal{R}^{\mu}}
\newcommand{\norm}[1]{{\left\lVert \, #1 \, \right\rVert}}

\begin{document}

\title
[On the periodic approximation of Lyapunov exponents]
{On the periodic approximation of Lyapunov exponents for semi-invertible cocycles}

\author{Lucas Backes}

\address{\noindent Departamento de Matem\'atica, Universidade Federal do Rio Grande do Sul, Av. Bento Gon\c{c}alves 9500, CEP 91509-900, Porto Alegre, RS, Brazil.
\newline e-mail: \rm
  \texttt{lhbackes@impa.br} }

\date{\today}

\keywords{Semi-invertible linear cocycles, Lyapunov exponents, periodic points, approximation}
\subjclass[2010]{Primary: 37H15, 37A20; Secondary: 37D25}

\begin{abstract}
We prove that, for semi-invertible linear cocycles, Lyapunov exponents of ergodic measures may be approximated by Lyapunov exponents on periodic points.
\end{abstract}

\maketitle

\section{Introduction}

A very general and also vague idea that appears in the study of dynamical systems is that ``if a system exhibits enough hyperbolicity then most of its dynamical interesting information is concentrated in its periodic orbits''. There are many examples supporting this idea. For instance, it is known that cohomology classes of H\"older cocycles over hyperbolic systems are characterized by its information on periodic points (see for instance \cite{Liv71, Liv72, Kal11, dLW10, Bac15, Sa15, BK16, KP16} and references therein), equilibrium states associated to different potentials coincide whenever those potentials have the same information on periodic points \cite{Bow75} and so on.

In the present work we also present an example supporting the previous ``belief'' in the context of Lyapunov exponents of \emph{semi-invertible} linear cocycles. More precisely, we show that (see Section \ref{sec: statements} for precise definitions and statements)
\begin{theorem}\label{theo: introduction}
Lyapunov exponents of ergodic measures may be approximated by Lyapunov exponents on periodic points.
\end{theorem}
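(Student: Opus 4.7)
I would adapt Kalinin's invertible-case strategy to the semi-invertible setting. The first step is a reduction to the top Lyapunov exponent: each $\lambda_i(\mu)$ is a difference of the partial sums $s_k(\mu) = \lambda_1(\mu) + \cdots + \lambda_k(\mu)$, each $s_k(\mu)$ is the top Lyapunov exponent (with respect to $\mu$) of the $k$-th exterior-power cocycle $\Lambda^k A$ (which is again semi-invertible), and the top log-eigenvalue modulus of the periodic product $\Lambda^k(A^\pi(p))$ equals the sum of the top $k$ log-eigenvalue moduli of $A^\pi(p)$. Hence it is enough to approximate the top Lyapunov exponent of an arbitrary semi-invertible cocycle by $\tfrac{1}{\pi}\log \rho(A^\pi(p))$ for periodic points $p$.

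Fix $\epsilon > 0$. The semi-invertible Oseledets theorem (Froyland--Lloyd--Quas) provides a measurable, forward-invariant top Oseledets subspace $E^1(x)$ of dimension equal to the multiplicity of $\lambda_1(\mu)$, on which vectors grow at the exact rate $\lambda_1(\mu)$. A Lusin--Egorov argument then yields a compact block $K_\epsilon$ of measure $>1-\epsilon$ on which $E^1$ is continuous and $C^{-1} e^{n(\lambda_1(\mu)-\epsilon)} \leq \|A^n(x) v\| \leq C e^{n(\lambda_1(\mu)+\epsilon)}$ for every unit $v \in E^1(x)$, every $n \geq 0$, and some $C = C(\epsilon)$. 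Under the paper's hyperbolicity and H\"older-regularity hypotheses, the hyperbolic closing lemma then produces, for each large $n$, a periodic point $p$ of period $n$ whose orbit exponentially shadows a segment $x, f(x), \ldots, f^{n-1}(x)$ with $x, f^n(x) \in K_\epsilon$ and $d(x, f^n(x))$ small; H\"older regularity of $A$ transfers the estimate above to $A^n(p)$ up to multiplicative error negligible on the logarithmic scale.

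To finish, let $u(x)$ and $u(f^n(x))$ be unit vectors in $E^1(x)$ and $E^1(f^n(x))$; by continuity of $E^1$ and closeness of $x$ and $f^n(x)$, these vectors are close. Forward invariance of $E^1$ together with the growth bounds above gives $A^n(x) u(x) = c_n\, u(f^n(x))$ with $|c_n| \in [C^{-1} e^{n(\lambda_1(\mu)-\epsilon)}, C e^{n(\lambda_1(\mu)+\epsilon)}]$ (in the simple-top case; otherwise the restriction of $A^n(x)$ to $E^1(x)$ is an approximate similarity into $E^1(f^n(x))$ by factor $e^{n\lambda_1(\mu)}$). Shadowing then yields $A^n(p) u(x) \approx c_n\, u(x)$, making $u(x)$ an approximate eigenvector of $A^n(p)$ with approximate eigenvalue $c_n$, which by a standard matrix-perturbation argument forces $A^n(p)$ to possess a genuine eigenvalue of modulus at least $e^{n(\lambda_1(\mu) - O(\epsilon))}$, so that $\tfrac{1}{n}\log \rho(A^n(p)) \geq \lambda_1(\mu) - O(\epsilon)$. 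Combined with the trivial $\rho(A^n(p)) \leq \|A^n(p)\| \leq e^{n(\lambda_1(\mu)+O(\epsilon))}$, this delivers the desired approximation. The main obstacle, as I see it, is rigorously establishing the existence, continuity, and dimensional stability of $E^1$ on $K_\epsilon$ --- more delicate in the semi-invertible setting since the matrices $A(x)$ may be singular --- together with converting the approximate-eigenvector information into a genuine eigenvalue lower bound, for which either a direct matrix perturbation estimate or a Brouwer fixed-point argument on the projectivisation of $\mathbb{R}^d$ should suffice.
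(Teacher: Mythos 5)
Your overall architecture (reduction to the top exponent via exterior powers, Lusin blocks on which the Oseledets data is continuous, Anosov closing applied to a returning orbit segment) matches the paper's. However, the step that converts the shadowing information into the lower bound $\lambda_1(p)\geq\lambda_1(\mu)-O(\epsilon)$ has a genuine gap. An approximate eigenvector does \emph{not} force a nearby genuine eigenvalue for non-normal matrices: if $\norm{A^n(p)u-c_nu}\leq\epsilon\,|c_n|\norm{u}$, then $c_n$ lies only in the $\epsilon$-pseudospectrum, which for highly non-normal products (exactly what $A^n(p)$ is after $n$ steps of a nonuniformly hyperbolic cocycle) can be enormously larger than a neighbourhood of the spectrum; e.g.\ $\left(\begin{smallmatrix}0&R\\0&0\end{smallmatrix}\right)$ admits the approximate eigenpair $v=(1,c/R)$, eigenvalue $c$, with relative residual $c^2/R$, yet is nilpotent. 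This is precisely why the paper (following Kalinin) does not argue through a single period: it establishes that $A^n(p)$ maps a cone around $E^{1,A}_x$ strictly into itself while expanding the $E$-component, \emph{iterates} this $k$ times to get $\norm{A^{kn}(p)u}\geq 4^{-k}e^{kn(\lambda_1-2\delta)}\norm{u}$, and lets $k\to\infty$. Your parenthetical Brouwer-on-the-cone alternative is the correct fix (it is equivalent to the iteration), but it presupposes the cone invariance, and that in turn requires the Lyapunov-norm machinery: the perturbations $B_j=A(f^j(p))-A(f^j(x))$ are only $O(\rho^\alpha)$ at the ends of the orbit, and one needs a norm in which each $A(f^j(x))$ has one-step norm $\leq e^{\lambda_1+\delta}$ and in which the tempered growth $K_\delta(f^j(x))\leq Ne^{\delta\min\{j,n-j\}}$ beats the decay $e^{-\theta\alpha\min\{j,n-j\}}$ of $\norm{B_j}$. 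Your phrase ``H\"older regularity transfers the estimate up to negligible multiplicative error'' is where all of this is hidden; in the standard norm the naive product bound fails.

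The second omission is the infinite exponents, which are the distinctive feature of the semi-invertible setting. When $A(x)$ can be singular, some $\gamma_i(\mu)=-\infty$, and your reduction produces exterior-power cocycles $\Lambda^kA$ with top exponent $-\infty$; for these there is no subspace on which vectors grow at rate $e^{n(\lambda_1-\epsilon)}$, the cone/closing lower bound is vacuous, and what must be shown is that $\lambda_1(\Lambda^kA,p_k)\to-\infty$. The paper handles this with a separate argument (Proposition \ref{prop: appr infinite Lyap}): the periodic measures $\mu_{p_k}$ converge weak$^*$ to $\mu$, and Kingman's subadditive characterization $\lambda_1=\inf_n\frac1n\int\log\norm{B^n}\,d\mu$ together with a monotone truncation of $\log\norm{B^n}$ yields $\limsup_k\lambda_1(\Lambda^kA,\mu_{p_k})=-\infty$. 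Without some such argument your proof only covers the finitely many exponents above $-\infty$.
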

In other words, all the information carried by the Lyapunov exponents of $(f,A)$ is indeed concentrated on periodic points.

The objects involved in our example are very classical in the fields of Dynamical Systems and Ergodic Theory and can be defined as follows: given an invertible ergodic measure preserving dynamical system $f:M \rightarrow M$ defined on a measure space $(M,\mathcal{A},\mu)$ and a measurable matrix-valued map $A:M\rightarrow M(d, \mathbb{R})$, the pair $(f,A)$ is called a \textit{semi-invertible linear cocycle} (or just \textit{linear cocycle} for short). Sometimes one calls linear cocycle (over $f$ generated by $A$), instead, the sequence $\lbrace A^n\rbrace _{n\in \mathbb{N}}$ defined by
\begin{equation}\label{def:cocycles}
A^n(x)=
\left\{
	\begin{array}{ll}
		A(f^{n-1}(x))\ldots A(f(x))A(x)  & \mbox{if } n>0 \\
		Id & \mbox{if } n=0 \\
		%A(f^{n}(x))^{-1}\ldots A(f^{-1}(x))^{-1}& \mbox{if } n<0 \\
	\end{array}
\right.
\end{equation}
for all $x\in M$. The word `semi-invertible' refers to the fact that the action of the underlying dynamical system $f$ is invertible while the action on the fibers given by $A$ may fail to be invertible. We refer to the Introduction of \cite{DrF} for some interesting applications of semi-invertible cocycles.

Under certain integrability conditions, it was proved in \cite{FLQ10} that there exists a full $\mu$-measure set $\Reg \subset M$, whose points are called $\mu$-regular points, such that for every $x\in \Reg$ there exist numbers $\lambda _1>\ldots > \lambda _{l}\geq -\infty$, called \textit{Lyapunov exponents}, and a direct sum decomposition $\mathbb{R}^d=E^{1,A}_{x}\oplus \ldots \oplus E^{l,A}_{x}$ into vector subspaces which are called \textit{Oseledets subspaces} and depend measurable on $x$ such that, for every $1\leq i \leq l$,
\begin{itemize}
\item dim$(E^{i,A}_{x})$ is constant,
\item $A(x)E^{i,A}_{x}\subseteq E^{i,A}_{f(x)}$ with equality when $\lambda _i>-\infty$
\end{itemize}
and
\begin{itemize}
\item $\lambda _i =\lim _{n\rightarrow +\infty} \dfrac{1}{n}\log \parallel A^n(x)v\parallel$ 
for every non-zero $v\in E^{i,A}_{x}$.
\end{itemize}
This result extends a famous theorem due to Oseledets \cite{Ose68} known as the \textit{multiplicative ergodic theorem} which was originally stated in both, \textit{invertible} (both $f$ and the matrices are assumed to be invertible) and \textit{non-invertible} (neither $f$ nor the matrices are assumed to be invertible) settings (see also \cite{LLE}). While in the invertible case the conclusion is similar to the conclusion above (except that all Lyapunov exponents are finite), in the non-invertible case, instead of a direct sum decomposition into invariant vector subspaces, one only get an invariant \textit{filtration} (a sequence of nested subspaces) of $\R ^d$.

In the invertible setting, Theorem \ref{theo: introduction} was already gotten by Kalinin in \cite{Kal11} extending a theorem of Wang and Sun \cite{WS10} on the approximation of Lyapunov exponents of hyperbolic invariant measures for diffeomorphims. In fact, the proof of our main result is based on ideas from those works. The lack of invertibility of the matrices, however, brings in some additional difficulties. To deal with it, we introduce the notion of \emph{Lyapunov norm} for semi-invertible cocycles and present some useful properties about these objects. 

It is also worth noticing that a similar approximation result was gotten by Dai \cite{Dai10} in the case when just the matrices are assumed to be invertible. More recently, Kalinin and Sadovskaya \cite{KS} addressed a similar problem in the invertible setting but when the cocycle takes values in the set of invertible operators of a Banach space. In such setting, Theorem \ref{theo: introduction} can not be fully recovered.

\section{Statements}\label{sec: statements}

Let $(M,d)$ be a compact metric space, $\mu$ a measure defined on the Borel sets of $(M,d)$ and $f: M \to M $ a measure preserving homeomorphism. Assume also that $\mu$ is ergodic.

We say that $f$ satisfies the \textit{Anosov Closing property} if there exist $C_1 ,\varepsilon _0 ,\theta >0$ such that if $z\in M$ satisfies $d(f^n(z),z)<\varepsilon _0$ then there exists a periodic point $p\in M$ such that $f^n(p)=p$ and
\begin{displaymath}
d(f^j(z),f^j(p))\leq C_1 e^{-\theta \min\lbrace j, n-j\rbrace}d(f^n(z),z)
\end{displaymath}
for every $j=0,1,\ldots ,n$. Notice that shifts of finite type, basic pieces of Axiom A diffeomorphisms and more generally, hyperbolic homeomorphisms are particular examples of maps satisfying the Anosov Cloisng property. See for instance, \cite{KH95} p.269, Corollary 6.4.17.

Given a continuous map $A: M\to M(d,\mathbb{R})$ such that $\int \log ^{+}\norm{A(x)}d\mu (x)<\infty$, let us denote by 
\begin{displaymath}
\lambda _1 (A,\mu)>\lambda _2 (A,\mu) > \dots >\lambda _l (A,\mu)\geq -\infty
\end{displaymath}
the Lyapunov exponents of the cocycle $(f,A)$ with respect to the measure $\mu$ and by
$$\gamma _1(A,\mu)\geq \gamma _2(A,\mu)\geq \ldots \geq \gamma _d(A,\mu)$$
the Lyapunov exponents of $(f,A)$ with respect to $\mu$ counted with multiplicities. Given a periodic point $p$, we denote its Lyapunov exponents and Lyapunov exponents counted with multiplicities by $\{\lambda _i(A,p)\}_{i=1}^l$ and $\{\gamma _i(A,p)\}_{i=1}^d$, respectively. When there is no risk of ambiguity, we suppress the index $A$ or even both $A$ and $\mu$ from the previous objects. 

In what follows we are also going to assume that $A:M\to M(d,\R)$ is an $\alpha$-H\"{o}lder continuous map. This means that there exists a constant $C_2>0$ such that
\begin{displaymath}
\norm{A(x)-A(y)} \leq C_2 d(x,y)^{\alpha}
\end{displaymath}
for all $x,y\in M$ where $\norm{A}$ denotes the operator norm of a matrix $A$, that is, $\norm{A} =\sup \lbrace \norm{Av}/\norm{v};\; \norm{v}\neq 0 \rbrace$.

\subsection{Main results}\label{subsec: main results} The main result of this work is the following one
\begin{theorem}\label{theo: main} 
Let $f: M\to M $ be a homeomorphism satisfying the Anosov Closing property, $\mu$ an ergodic $f$-invariant probability measure and $A:M\to M(d,\R)$ an $\alpha$-H\"{o}lder continuous map. Then, there exists a sequence of periodic points $(p_k)_{k\in \mathbb{N}}$ such that
\begin{displaymath}
	\gamma _i (A,p_k)\xrightarrow{k\to +\infty} \gamma _i (A,\mu)
\end{displaymath} 
for every $i=1,\ldots ,d$.
\end{theorem}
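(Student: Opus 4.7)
The plan is to adapt Kalinin's argument \cite{Kal11} from the invertible setting to the semi-invertible one, using the Lyapunov norm for semi-invertible cocycles that the paper promises to develop. First I would fix a $\mu$-regular point $x_0 \in \Reg$ that additionally satisfies the conclusions of Birkhoff's ergodic theorem and along whose forward orbit the Lyapunov norm is tempered. Ergodicity of $\mu$ combined with Poincar\'e recurrence produces a sequence of return times $n_k \to +\infty$ with $d(f^{n_k}(x_0), x_0) < \varepsilon_0$ and $d(f^{n_k}(x_0), x_0) \to 0$. The Anosov Closing property then yields periodic points $p_k$ with $f^{n_k}(p_k) = p_k$ whose orbits exponentially shadow $\{f^j(x_0)\}_{0 \le j \le n_k}$, and these are the candidates for approximation.

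The key estimate is a comparison between $A^{n_k}(p_k)$ and $A^{n_k}(x_0)$ via the telescoping identity
\[
A^{n_k}(p_k) - A^{n_k}(x_0) = \sum_{j=0}^{n_k-1} A^{n_k-j-1}(f^{j+1}(x_0))\bigl[A(f^j(p_k)) - A(f^j(x_0))\bigr] A^{j}(p_k).
\]
Combining the $\alpha$-H\"older hypothesis with the exponential shadowing gives
$\bigl\|A(f^j(p_k)) - A(f^j(x_0))\bigr\| \le C_2 C_1^{\alpha} e^{-\alpha\theta\min\{j,n_k-j\}}\, d(f^{n_k}(x_0),x_0)^\alpha$,
so the bracketed term decays geometrically away from the endpoints of the orbit segment. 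Measured in the Lyapunov norm along the orbit of $x_0$, the two outer factors $A^{n_k-j-1}(f^{j+1}(x_0))$ and $A^{j}(p_k)$ (the latter being close to $A^{j}(x_0)$) act almost diagonally on the Oseledets subspaces, and the decay of the bracketed factor beats their combined growth. The conclusion is that, in Lyapunov norm, $A^{n_k}(p_k)$ is a small relative perturbation of $A^{n_k}(x_0)$; hence the singular values, and therefore the Lyapunov exponents, of $A^{n_k}(p_k)$ converge to those of $\mu$.

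The main obstacle is the semi-invertibility itself: when $\lambda_l = -\infty$, the Oseledets subspace $E^{l,A}_x$ is only forward-invariant with possibly no equality in the pullback, so the Lyapunov norm cannot be defined symmetrically in forward and backward time as in Kalinin's setup. I would construct it using only forward iterates, arranging that it accurately diagonalizes $A^n$ on the sum of Oseledets subspaces corresponding to the finite exponents, and handle the $-\infty$ component separately: the telescoping estimate should force the corresponding singular values of $A^{n_k}(p_k)$ to decay superexponentially, so the matching Lyapunov exponents of $p_k$ tend to $-\infty$, as required. A second technical point is that the Lyapunov norm is available a priori only along the orbit of $x_0$ and must be transported to the periodic orbit; this transport uses the $\alpha$-H\"older continuity of $A$ together with the exponential shadowing and is, I expect, precisely where the paper's new notion of ``Lyapunov norm for semi-invertible cocycles'' plays its essential role.

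Finally, to upgrade the convergence of the top singular values to convergence of every $\gamma_i$ counted with multiplicity, I would apply the same argument to the exterior power cocycles $\Lambda^i A$ for $i=1,\ldots,d$. Each $\Lambda^i A$ is again $\alpha$-H\"older continuous and semi-invertible, its top Lyapunov exponent equals $\gamma_1(A,\mu) + \cdots + \gamma_i(A,\mu)$, and showing convergence of this top exponent along the periodic orbits $p_k$ for every $i$ yields the full conclusion of Theorem \ref{theo: main}.
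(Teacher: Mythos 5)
Your overall strategy (regular recurrent point, Anosov closing, H\"older-plus-exponential-shadowing estimates in a Lyapunov norm, exterior powers $\Lambda^iA$ to capture all $\gamma_i$) is the same as the paper's, but the central step of your argument has a genuine gap. From the telescoping identity you conclude that ``in Lyapunov norm, $A^{n_k}(p_k)$ is a small relative perturbation of $A^{n_k}(x_0)$; hence the singular values, and therefore the Lyapunov exponents, of $A^{n_k}(p_k)$ converge.'' This inference is not valid: the Lyapunov exponents of the periodic point $p_k$ are the (normalized) moduli of the \emph{eigenvalues} of the return map $A^{n_k}(p_k)$, equivalently they are read off from all powers $A^{mn_k}(p_k)$ as $m\to\infty$, not from the singular values of the single matrix $A^{n_k}(p_k)$. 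A matrix can be norm-close to $A^{n_k}(x_0)$ and still have spectral radius far below $\|A^{n_k}(x_0)\|$ (think of a shear), so a one-shot comparison of the two matrices only yields the \emph{upper} bound $\lambda_1(p_k)\le\lambda_1+O(\delta)$ (this part of your sketch is essentially the paper's upper-bound lemma). The missing ingredient is a lower bound that survives iteration around the closed orbit: the paper proves that each perturbed factor $A(f^j(p_k))$, measured in the Lyapunov norms along the orbit of $x_0$, maps a cone about $E^{1,A}_{f^j(x_0)}$ into the corresponding cone at the next point and expands the $E^1$-component at rate at least $e^{\lambda_1-2\delta}$; since $f^{n_k}(x_0)$ returns into a set where the splitting and the norm are continuous, the cone at time $n_k$ sits inside the initial cone, so $A^{n_k}(p_k)$ maps a cone into itself and the estimate can be iterated in $m$, forcing $\lambda_1(p_k)\ge\lambda_1-3\delta$. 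Your phrase ``act almost diagonally on the Oseledets subspaces'' points in this direction, but without the cone invariance (or some equivalent control of all powers of the return map) the conclusion does not follow.

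A second, smaller issue is the infinite exponent. Your claim that the telescoping estimate forces the corresponding singular values of $A^{n_k}(p_k)$ to decay superexponentially is not substantiated: for fixed $k$ the perturbation terms are only of size $\rho_k^{\alpha}$, which does not produce decay at a rate tending to $-\infty$ per period, and the eigenvalue-versus-singular-value problem reappears. The paper avoids this entirely by a different mechanism: it proves an upper-semicontinuity statement (via Kingman's subadditive theorem, truncations $\varphi_{n,m}=\max\{\varphi_n,-m\}$, and monotone convergence) showing that if $\lambda_1(B,\mu)=-\infty$ and ergodic measures $\mu_j\to\mu$ weak$^*$, then $\limsup_j\lambda_1(B,\mu_j)=-\infty$, and applies it to the cocycles $\Lambda^iA$ whose top exponent is $-\infty$, using that the periodic orbit measures $\mu_{p_k}$ converge weak$^*$ to $\mu$ (which follows from choosing $x_0$ in the basin of $\mu$ together with the shadowing estimate). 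You would need either to adopt such a semicontinuity argument or to make your norm estimate for the $-\infty$ block quantitative (e.g., a bound of the form $e^{-1/\delta}+C\rho_k^{\alpha}$ per factor followed by a diagonal argument in $\delta$), neither of which is present in the sketch.
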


As a simple consequence of our main result we get the following corollary. In order to state it, let us assume that $f$ and $A$ satisfy the hypotheses of Theorem \ref{theo: main}. Then,

\begin{corollary}\label{cor: bounded}
If all Lyapunov exponents of $(f,A)$ on periodic points are uniformly bounded by bellow then $A(x)\in GL(d, \R)$ for almost every $x\in M$ with respect to \emph{every} $f$-invariant probability measure $\mu$. 
\end{corollary}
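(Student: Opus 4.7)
The plan is to argue by contradiction: assume there exists an $f$-invariant probability measure $\mu_0$ with $\mu_0(\{x : A(x) \notin GL(d,\R)\}) > 0$ and derive a violation of the uniform lower bound on Lyapunov exponents of periodic points. Since Theorem~\ref{theo: main} requires the measure to be ergodic, I would first pass to an ergodic component $\nu$ via the ergodic decomposition theorem, chosen so that $\nu(\{x : \det A(x) = 0\}) > 0$; such a component must exist because the set has positive $\mu_0$-measure and is $f$-invariant up to a null set.

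The core step is to show that for such $\nu$ the smallest Lyapunov exponent satisfies $\gamma_d(A,\nu) = -\infty$. By ergodicity and Poincar\'e recurrence, for $\nu$-a.e.\ $x$ the forward orbit $\{f^k(x)\}_{k\geq 0}$ visits the singular set $\{y : \det A(y) = 0\}$ (in fact, infinitely often), so there exists $k=k(x)$ with $A(f^k(x))$ not invertible. Consequently the product $A^n(x)$ contains a non-invertible factor for every $n > k$ and is therefore singular, so its smallest singular value $\sigma_d(A^n(x))$ vanishes for all $n$ large enough. Since the multiplicative ergodic theorem of \cite{FLQ10} identifies $\gamma_d(A,\nu)$ with the $\nu$-a.e.\ limit of $\tfrac{1}{n}\log\sigma_d(A^n(x))$, this limit is indeed $-\infty$.

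With this in hand, Theorem~\ref{theo: main} applied to the ergodic measure $\nu$ yields a sequence of periodic points $(p_k)$ with $\gamma_d(A,p_k) \to \gamma_d(A,\nu) = -\infty$, directly contradicting the hypothesis that all Lyapunov exponents on periodic points are uniformly bounded below. This contradiction forces $\mu(\{x : A(x)\notin GL(d,\R)\}) = 0$ for every $f$-invariant $\mu$, which is the desired conclusion.

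The only delicate point in the plan is the middle step: one must confirm that in the semi-invertible Oseledets theorem the $d$-th Lyapunov exponent truly takes the value $-\infty$ (rather than being undefined) when $A^n(x)$ is eventually singular along $\nu$-a.e.\ orbit. Since \cite{FLQ10} explicitly admits $\lambda_l = -\infty$, and the vanishing of $\sigma_d(A^n(x))$ for $n$ large forces precisely that situation, I expect no real difficulty; all remaining ingredients (ergodic decomposition and invocation of Theorem~\ref{theo: main}) are standard or already at our disposal.
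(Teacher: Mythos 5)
Your proposal is correct and takes essentially the same route as the paper's one-line proof: pass to an ergodic component charging the singular set via the ergodic decomposition, observe that then $\gamma_d(A,\nu)=-\infty$, and contradict the uniform lower bound by applying Theorem~\ref{theo: main} to $\nu$. One small remark: the set $\{x:\det A(x)=0\}$ need not be $f$-invariant, but that justification is unnecessary—since $\mu_0=\int \nu\, d\pi(\nu)$, positivity of $\mu_0$ on this set already forces some ergodic component $\nu$ to charge it; your recurrence/singular-value argument then correctly supplies the step $\gamma_d(A,\nu)=-\infty$ that the paper leaves implicit.
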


\begin{proof}
If there exist a measure $\mu$, which by the the Ergodic Decomposition Theorem may be assumed to be ergodic, and a set $B\subset M$ with positive $\mu$-measure such that $A(x)\notin GL(d, \R)$ for every $x\in B$ then $\gamma _d(A,\mu)=-\infty$ which in light of Theorem \ref{theo: main} contradicts our assumption.
\end{proof}

We observe that satisfying $A(x)\in GL(d, \R)$ for almost every $x\in M$ with respect to \emph{every} $f$-invariant probability measure as in the previous corollary does not imply, in general, that $A(x)\in GL(d, \R)$ for every $x\in M$. Indeed,

\begin{example}
\normalfont Let $M=\{0,1\}^{\mathbb{Z}}$ be the space of bilateral sequences in zeros and ones and $f :M\to M$ be the left shift $f((x_i)_{i\in \mathbb{Z}})=(x_{i+1})_{i\in \mathbb{Z}}$. Given $\theta \in (0,1)$ we endow $M$ with the distance
\begin{displaymath}
d(x,y)=\theta ^{N(x,y)},\; \textrm{where} \; N(x,y)=\max \lbrace N\geq 0; x_n=y_n \; \textrm{for all} \mid n\mid < N \rbrace
\end{displaymath}
for $x=(x_i)_{i\in \mathbb{Z}}$ and $y=(y_i)_{i\in \mathbb{Z}}$. A very well known fact is that $(M,d)$ is a compact metric space and $f$ is a homeomorphism satisfying the Anosov Closing property. Let $q=(q_i)_{i\in \mathbb{Z}}\in M$ be such that $q_i=1$ for every $i\neq -1$ and $q_{-1}=0$ and fix $a>1$ so that $a\theta >1$. Consider $A:M\to \mathbb{R}$ given by 
\begin{displaymath}
	A(x)=\left\{\begin{array}{cc}
		\frac{a}{\theta ^3} d (x,q)& \mbox{if} \quad d (x,q)\leq \theta ^{3}\\
		a & \mbox{if}\quad d (x,q)> \theta ^{3}.
	\end{array}
	\right.
\end{displaymath}
Then, $A$ is a H\"older map and generates a semi-invertible cocycle over $f$. Moreover, $A(q)=0$ and $(f,A)$ has \emph{all} Lyapunov exponents on periodic points uniformly bounded by bellow by $\log (a\theta)> 0$.  
In fact, if a periodic point $p\in M$ is such that $d(f^j(p),q)>\theta ^3$ for every $j\in \mathbb{N}$ then obviously $\lambda _1(p)=\log a>0$. Now, suppose there exists $j\in \mathbb{N}$ so that $d(f^j(p),q)\leq \theta ^3$. We may assume without loss of generality that $j=0$. More precisely, suppose $p=(p_i)_{i\in \mathbb{Z}}$ satisfies $p_i=q_i$ for every $\mid i\mid\leq n$ and $p_i\neq q_i$ for some $i$ so that $\mid i\mid =n+1$ with $n\geq 2$. Then, $A(p)=\frac{a}{\theta ^3}\theta ^{n+1}$ and $A(f^j(p))=a$ for, at least, $j=1, 2,\ldots ,n+1$. In particular, $A^{n+2}(p)=a^{n+2}\theta ^{n-2}$ and $\frac{1}{n+2}\log \norm{A^{n+2}(p)}\geq \log (a\theta)> 0$. In other words, whenever a periodic point $p$ is $\theta ^{n+1}$ close to $q$, its next $n+1$ iterations are going to be out of the ball of radius $\theta^3$ and centered at $q$. Repeating this argument we can construct a sequence $(n_k)_{k\in \mathbb{N}}$ going to $+\infty$ such that  
$$\frac{1}{n_k}\log \norm{A^{n_k}(p)} \geq \log (a\theta)> 0$$
for every $k\in \mathbb{N}$. In particular, $\lambda _1(p)\geq  \log (a\theta)>0$ as claimed. Consequently, by Corollary \ref{cor: bounded} $A(x)\in GL(1, \R)$ for almost every $x\in M$ with respect to every $f$-invariant probability measure and $A(q)\notin GL(1, \R)$. Observe that, besides the choice of $A$, the main feature underlying our construction is that $q\in W^s(p)\cap W^u(p)$ where $p$ is the fixed point $p=(p_i)_{i\in \mathbb{Z}}$ such that $p_i=1$ for every $i\in \mathbb{Z}$. Another simple remark is that such example can be constructed in any dimension. We take $A: M\to M(d,\mathbb{R})$ to be such that $A(x)$ is a diagonal matrix for every $x\in M$ where one of its entries is the map constructed above while the others are all constant. 
\end{example}

The previous example also reveals very different behaviors of invertible and semi-invertible cocycles. Indeed, it was proved by Cao in \cite{Cao03} that if an invertible cocycle $A$ defined over a homeomorphism $f$ has only positive Lyapunov exponents with respect to every $f$-invariant probability measure then it is uniformly expanding. The previous example shows us that this is no longer true in the semi-invertible context.

\section{Lyapunov norm} In order to estimate the growth of the cocycle $A$ along an orbit we introduce the notion of \textit{Lyapunov norm} for semi-invertible cocycles. This is based on a similar notion for \emph{invertible cocycles} (see for instance \cite{BP07}). 

Let $x\in \Reg$ be a regular point and $\mathbb{R}^d=E^{1,A}_{x}\oplus \ldots \oplus E^{l,A}_{x}$ be the Oseledets decomposition at $x$. Given $i\in \{1,\ldots , l-1 \}$ and $n\in \mathbb{N}$, let us consider the map
$$A^n(f^{-n}(x))_{\mid E^{i,A}_{f^{-n}(x)}}: E^{i,A}_{f^{-n}(x)}\to E^{i,A}_{x}$$
which is invertible and let us denote its inverse by $\left( A^n(f^{-n}(x))\right)^{-1}_i$. Now, for every $n\in \mathbb{Z}$ and $x\in \Reg$ let us consider the linear map $A^n_i(x):E^{i,A}_{x}\to E^{i,A}_{f^n(x)}$ given by
\begin{displaymath}
	A^n_i(x)u=\left\{\begin{array}{cc}
		A^n(x)_{\mid E^{i,A}_{x}}u & \mbox{if} \quad n\geq 0\\
		\left( A^{-n}(f^n(x))\right)^{-1}_i u & \mbox{if}\quad n<0.
	\end{array}
	\right.
\end{displaymath}
Observe that, for every $m,n\in \mathbb{Z}$,
\begin{equation}
A^{m+n}_i(x)=A^n_i(f^m(x))A^m_i(x).
\end{equation}
Indeed, for $m,n\geq 0$ it follows readily from the definition and \eqref{def:cocycles}. Suppose now $m,n>0$ and let us prove that $A^{-m-n}_i(x)=A^{-n}_i(f^{-m}(x))A^{-m}_i(x)$. We start observing that, since
$$\left( A^{m+n}(f^{-m-n}(x)) \right)_{\mid E^{i,A}_{f^{-m-n}(x)}}=\left( A^m(f^{-m}(x))A^{n} (f^{-m-n}(x))\right)_{\mid E^{i,A}_{f^{-m-n}(x)}} ,$$
it follows by the invariance of the Oseledets spaces that
$$\left( A^{m+n}(f^{-m-n}(x)) \right)_{\mid E^{i,A}_{f^{-m-n}(x)}}= A^m(f^{-m}(x))_{\mid E^{i,A}_{f^{-m}(x)}}A^{n} (f^{-m-n}(x))_{\mid E^{i,A}_{f^{-m-n}(x)}}.$$
Thus, taking the inverses on both sides the result follows. The case when $m$ and $n$ have different signs may be deduced by combining the previous two.

In order to define the \emph{Lyapunov norm} associated to the cocycle $A$ at a regular point $x\in \Reg$, we start by defining the \emph{Lyapunov inner product}: given $\delta >0$ and two vectors $u=u_1+\ldots +u_l $ and $v=v_1+\ldots +v_l $ in $\R ^d$ where $u_i, v_i \in E^{i,A}_x$ for every $1\leq i\leq l$, the \textit{$\delta$-Lyapunov inner product} of $u$ and $v$ is defined by
\begin{displaymath}
\langle u,v\rangle _{x,\delta}=\sum _{i=1}^l\langle u_i,v_i \rangle _{x,\delta,i}
\end{displaymath}
where 
\begin{equation}\label{eq: def Lyap i norm}
\langle u_i, v_i\rangle_{x,\delta,i}= \sum _{n\in \mathbb{Z}}\langle A^{n}_i(x)u_i, A^{n}_i(x)v_i \rangle e^{-2\lambda _i n -2\delta \mid n\mid}
\end{equation}
for every $i $ for which $\lambda _i$ is finite and 
\begin{equation}\label{eq: def Lyap l norm}
\langle u_l, v_l\rangle _{x,\delta,l}= \sum _{n=0}^{+\infty} \langle A^{n}(x)u_l,A^{n}(x)v_l \rangle e^{\frac{2}{\delta}n}
\end{equation}
in the case when $\lambda _l =-\infty$. Observe that both series \eqref{eq: def Lyap i norm} and \eqref{eq: def Lyap l norm} converge for any $x\in \Reg$. Indeed, convergence of the second one is easily to verify while the convergence of the first one follows from the next lemma whose proof is also going to be used in the sequel.

\begin{lemma}\label{lem: auxil 1}
For every $u\in E^{i,A}_x\setminus \{0\}$,
$$\lim _{n\to \pm \infty}\frac{1}{n}\log \norm{A^n_i(x)u}=\lambda _i.$$
\end{lemma}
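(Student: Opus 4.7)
The plan is to handle the two limits $n\to+\infty$ and $n\to-\infty$ separately. The forward case is immediate from the multiplicative ergodic theorem recalled in the introduction: by definition of $A^n_i$, for $n\geq 0$ we have $A^n_i(x)u=A^n(x)u$, and since $x\in\Reg$ and $u\in E^{i,A}_{x}\setminus\{0\}$, the theorem of \cite{FLQ10} directly yields $\frac{1}{n}\log\norm{A^n(x)u}\to \lambda_i$.

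For the backward limit, the plan is to set $m:=-n>0$, $y_m:=f^{-m}(x)$ and $v_m:=A^n_i(x)u\in E^{i,A}_{y_m}$, so that by the definition of $A^n_i$ for negative indices one has
\[A^m(y_m)\,v_m \;=\; u.\]
The goal then becomes showing $\frac{1}{m}\log\norm{v_m}\to -\lambda_i$. The strategy is to upgrade the pointwise forward Oseledets limit on the bundle $E^{i,A}$ to a tempered Lyapunov-regular estimate: for each $\varepsilon>0$ there should exist a measurable function $C_\varepsilon\colon \Reg\to[1,\infty)$, tempered in the sense that $\frac{1}{k}\log C_\varepsilon(f^k(y))\to 0$ as $|k|\to\infty$ for every $y\in\Reg$, satisfying
\[C_\varepsilon(y)^{-1}\,e^{k(\lambda_i-\varepsilon)}\norm{w}\;\leq\;\norm{A^k(y)w}\;\leq\;C_\varepsilon(y)\,e^{k(\lambda_i+\varepsilon)}\norm{w}\]
for every $y\in\Reg$, every $w\in E^{i,A}_{y}$, and every $k\geq 0$. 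Applying this estimate with $y=y_m$, $k=m$, $w=v_m$ and using $\norm{A^m(y_m)v_m}=\norm{u}$ gives
\[\frac{\norm{u}}{C_\varepsilon(y_m)\,e^{m(\lambda_i+\varepsilon)}}\;\leq\;\norm{v_m}\;\leq\;\frac{C_\varepsilon(y_m)\,\norm{u}}{e^{m(\lambda_i-\varepsilon)}}.\]
Taking $\frac{1}{m}\log$, invoking the temperedness $\frac{1}{m}\log C_\varepsilon(y_m)\to 0$, and letting $\varepsilon\to 0$ delivers $\frac{1}{m}\log\norm{v_m}\to -\lambda_i$, which is exactly the claim as $n=-m\to-\infty$.

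The main obstacle is justifying the existence of the tempered constant $C_\varepsilon$ in the semi-invertible setting. The expectation is that this goes through as in the invertible case: Egorov's theorem applied to the pointwise forward convergence rate on each $E^{i,A}_{y}$, combined with Birkhoff's ergodic theorem to control the growth of $C_\varepsilon$ along orbits, uses only the forward Oseledets limit supplied by \cite{FLQ10}, and hence remains valid here. An essentially equivalent alternative would be to observe that, since $\lambda_i$ is finite, the restriction of $A$ to the invariant subbundle $E^{i,A}$ is genuinely invertible; one could then invoke the two-sided Oseledets theorem directly for this restricted invertible cocycle, whose only exponent equals $\lambda_i$ with multiplicity $\dim E^{i,A}$. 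Either route avoids having to analyze $A^{-1}$ on the (possibly non-injective) ambient space and reduces the backward statement to a routine application of the classical theory on the invariant Oseledets bundle.
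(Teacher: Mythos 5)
Your proposal is correct and follows essentially the same route as the paper: both treat $n\to+\infty$ directly via the multiplicative ergodic theorem, and for $n\to-\infty$ both substitute $v=A^{-n}_i(x)u$ into a tempered two-sided estimate $C(y)^{-1}e^{k(\lambda_i-\varepsilon)}\norm{w}\le\norm{A^k(y)w}\le C(y)e^{k(\lambda_i+\varepsilon)}\norm{w}$ on the bundle $E^{i,A}$ and then use temperedness along the backward orbit. The only difference is that the paper obtains this estimate by citing Theorem 2 of \cite{DrF} rather than re-deriving it, so the "main obstacle" you identify is exactly the point the paper outsources to that reference.
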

\begin{proof}
The fact that $\lim _{n\to + \infty}\frac{1}{n}\log \norm{A^n_i(x)u}=\lambda _i$ follows from the definition and Oseledets' Theorem. Let us prove that $\lim _{n\to - \infty}\frac{1}{n}\log \norm{A^n_i(x)u}=\lambda _i$.
Given $\varepsilon >0$, it follows from Theorem 2 of \cite{DrF} that there exists a measurable map $C:M\to (0,\infty)$ such that
$$\frac{1}{C(f^{-n}(x))}e^{(\lambda _i-\varepsilon )n}\norm{v}\leq \norm{A^n(f^{-n}(x))v}\leq C(f^{-n}(x))e^{(\lambda _i+\varepsilon )n}\norm{v}$$
for every $n\in \mathbb{N}$ and $v\in E^{i,A}_{f^{-n}(x)}\setminus \{0\}$ and
$$C(f^n(x))\leq C(x)e^{\varepsilon\mid n\mid}$$
for any $n\in \mathbb{Z}$. Combining this two inequalities we get that
$$\frac{1}{C(x)}e^{(\lambda _i-2\varepsilon )n}\norm{v}\leq \norm{A^n(f^{-n}(x))v}\leq C(x)e^{(\lambda _i+2\varepsilon )n}\norm{v}$$
for every $n\in \mathbb{N}$ and $v\in E^{i,A}_{f^{-n}(x)}\setminus \{0\}$.

Let $u\in E^{i,A}_x\setminus \{0\}$ and $n\in \mathbb{N}$. Then, applying the previous inequality to $v=A^{-n}_i(x)u$ we get that
$$\frac{1}{C(x)}e^{(\lambda _i-2\varepsilon )n}\norm{A^{-n}_i(x)u}\leq \norm{u}\leq C(x)e^{(\lambda _i+2\varepsilon )n}\norm{A^{-n}_i(x)u}.$$
That is,
\begin{equation}\label{eq: comp norm lemma}
\frac{1}{C(x)}e^{-(\lambda _i+2\varepsilon )n}\norm{u}\leq \norm{A^{-n}_i(x)u}\leq C(x)e^{-(\lambda _i-2\varepsilon )n}\norm{u}.
\end{equation}
Taking the logarithm at each term, dividing by $-n$ and making $n\to +\infty$ it follows that
$$\lambda _i -2\varepsilon \leq \lim _{n\to +\infty}\frac{1}{-n}\log \norm{A^{-n}_i(x)u} \leq \lambda _i +2\varepsilon.$$
Now, since $\varepsilon >0$ is arbitrary the lemma follows.
\end{proof}

In particular, it follows that $\langle \cdot ,\cdot \rangle _{x,\delta}$ is actually an inner product in $\R^d$. We then define the \emph{$\delta$-Lyapunov norm} $\norm{.}_{x,\delta}$ associated to the cocycle $A$ at $x\in \Reg$ as the norm generated by $\langle \cdot ,\cdot \rangle _{x,\delta}$. When there is no risk of ambiguity, we just write $\norm{.}_x$ and $\norm{.}_{x,i}$ instead of $\norm{.}_{x,\delta}$ and $\norm{.}_{x,\delta,i}$ and call it just \emph{Lyapunov norm}.

Given a liner map $B\in M(d,\R)$, its Lyapunov norm is defined for any regular points $x,y\in \Reg$ by
\begin{displaymath}
\norm{B}_{y\leftarrow x}=\sup \{\norm{Bu}_{y}/\norm{u}_{x}; \; u\in \R^d\setminus \{0\} \}.
\end{displaymath}  

The next proposition gives us some useful properties of the Lyapunov norm that we are going to use in the sequel. In the case when the cocycle is invertible, similar properties of the Lyapunov norm are very well known. We now prove them in the semi-invertible setting.

\begin{proposition} Let $x\in \Reg$.

i) For every $1\leq i<l$ and $u\in E^{i,A}_x$ we have
\begin{equation} \label{ineq: Lyapunov norm}
e^{(\lambda _i -\delta)n}\norm{u}_{x,i}\leq \norm{A^n(x)u}_{f^n(x),i}\leq e^{(\lambda _i +\delta)n}\norm{u}_{x,i} 
\end{equation}
for every $n\in \N$;

ii) If $\lambda _l =-\infty$ then for every $u\in E^{l,A}_x$ and $n\in \mathbb{N}$ we have
$$\norm{A^n(x)u}_{f^n(x),l}\leq e^{-\frac{1}{\delta} n}\norm{u}_{x,l}; $$

iii) For $\delta >0$ such that $-\frac{1}{\delta}<\lambda _1$ we have 
\begin{equation}\label{ineq: norm}
 \norm{A^n(x)}_{f^n(x)\leftarrow x}\leq e^{(\lambda _1 +\delta)n} 
\end{equation}
for every $n\in \N$;

iv) For every $\delta >0$ such that $-\frac{1}{\delta}<\lambda _{l-1}$, there exists a measurable function $K_{\delta}:\Reg \to (0,+\infty)$ such that
\begin{equation}\label{ineq: norm x Lyapunov norm}
\norm{u}\leq \norm{u}_x\leq K_{\delta}(x)\norm{u}
\end{equation}
whose growth along any regular orbit is bounded; more precisely,
\begin{equation}\label{ineq: K delta growth}
K_{\delta}(x)e^{-\delta n}\leq K_{\delta}(f^n(x))\leq K_{\delta}(x)e^{\delta n} \quad \forall n\in \N.
\end{equation} 
Consequently, for any linear map $B$ and any regular points $x$ and $y$
\begin{equation}\label{ineq: norm x Lyapunov norm operator}
K_{\delta}(x)^{-1}\norm{B}\leq \norm{B}_{y\leftarrow x}\leq K_{\delta}(y)\norm{B}.
\end{equation}

\end{proposition}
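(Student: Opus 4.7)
The plan is to verify the four parts in order. Parts (i)--(iii) reduce to direct manipulation of the series defining the Lyapunov inner product; (iv) will require invoking the quantitative semi-invertible Oseledets theorem from \cite{DrF} and will be the main obstacle. The common thread is the cocycle identity $A^{n+k}_i(x) = A^k_i(f^n(x))\,A^n(x)|_{E^{i,A}_x}$ for $u \in E^{i,A}_x$, which follows from the cocycle law already established for the $A^n_i$ together with the invariance of the Oseledets spaces.

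For (i), I would expand $\norm{A^n(x)u}^2_{f^n(x),i}$ via \eqref{eq: def Lyap i norm}, substitute the cocycle identity, re-index with $m = n+k$, and pull out $e^{2\lambda_i n}$. The surviving factor $e^{-2\delta|m-n|}$ can be squeezed using the triangle inequalities $|m| - n \leq |m-n| \leq |m| + n$ (valid for $n \geq 0$), giving exactly the factors $e^{\pm 2\delta n}$ claimed in \eqref{ineq: Lyapunov norm}. For (ii), the same re-indexing applied to the one-sided series \eqref{eq: def Lyap l norm} yields $e^{-\frac{2}{\delta}n} \sum_{m \geq n} \norm{A^m(x)u}^2 e^{\frac{2}{\delta}m} \leq e^{-\frac{2}{\delta}n} \norm{u}^2_{x,l}$. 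For (iii), orthogonality of the Lyapunov inner product along the Oseledets decomposition gives $\norm{A^n(x)u}^2_{f^n(x)} = \sum_i \norm{A^n(x)u_i}^2_{f^n(x),i}$; applying (i) and (ii) block by block, together with $\lambda_i \leq \lambda_1$ and the hypothesis $-1/\delta < \lambda_1$, collapses every per-block bound to $e^{2(\lambda_1 + \delta)n}$ and produces \eqref{ineq: norm}.

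For (iv), the main obstacle is constructing $K_\delta$ and matching its tempered growth rate to $\delta$. I would invoke the quantitative Oseledets bounds from Theorem 2 of \cite{DrF} already used in Lemma \ref{lem: auxil 1}: for any $\varepsilon > 0$ there is a measurable $C_\varepsilon : \Reg \to (0,\infty)$ satisfying $C_\varepsilon(f^n(x)) \leq C_\varepsilon(x) e^{\varepsilon|n|}$ and $\norm{A^m_i(x) u} \leq C_\varepsilon(x) e^{\lambda_i m + 2\varepsilon|m|} \norm{u}$ for all $m \in \mathbb{Z}$ and $u \in E^{i,A}_x$ when $\lambda_i$ is finite; when $\lambda_l = -\infty$ one has instead the decay $\norm{A^m(x)u} \leq C_\varepsilon(x) e^{-Mm}\norm{u}$ for any prescribed $M > 1/\delta$. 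Substituting into \eqref{eq: def Lyap i norm}--\eqref{eq: def Lyap l norm} with $\varepsilon$ chosen so that $4\varepsilon < 2\delta$ turns each series into a convergent geometric sum bounded by a multiple of $C_\varepsilon(x)^2 \norm{u_i}^2$, and collecting the resulting constants into $K_\delta(x)$ yields $\norm{u}_x \leq K_\delta(x)\norm{u}$ together with the tempering estimate \eqref{ineq: K delta growth}.

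The opposite inequality $\norm{u} \leq \norm{u}_x$ is built into the $n = 0$ term of each series, which contributes $\norm{u_i}^2$ to $\norm{u_i}^2_{x,i}$; the non-orthogonality of the Oseledets decomposition (the ``angle loss'' between the $E^{i,A}_x$) is absorbed either into the geometric-series prefactor of $\norm{u_i}^2_{x,i}$ (by shrinking $\delta$ so that the $e^{-2\delta|n|}$-sum dominates) or into a further rescaling of the norm captured by $K_\delta$. Finally, \eqref{ineq: norm x Lyapunov norm operator} is formal: the chain $\norm{Bu}_y \leq K_\delta(y)\norm{Bu} \leq K_\delta(y)\norm{B}\norm{u} \leq K_\delta(y)\norm{B}\norm{u}_x$ gives the upper bound, while $\norm{Bu}_y \geq \norm{Bu}$ combined with $\norm{u}_x \leq K_\delta(x)\norm{u}$ gives the lower one. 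The delicate point is synchronizing the choice of $\varepsilon$ in the Oseledets bound, the parameter $\delta$ in the Lyapunov norm, and the required tempered growth rate of $K_\delta$.
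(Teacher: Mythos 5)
Parts (i)--(iii) of your proposal, and the first half of (iv) (bounding each $\norm{u_i}_{x,i}$ by a tempered multiple of $\norm{u_i}$ via the quantitative bounds of \cite[Theorem 2]{DrF}), follow the same computations as the paper and are fine. The genuine gap is in the remaining half of (iv): from $\norm{u}_{x}^2=\sum_i\norm{u_i}_{x,i}^2\le \mathrm{const}\,C_\varepsilon(x)^2\sum_i\norm{u_i}^2$ you still need $\norm{u_i}\le (\text{tempered function of }x)\cdot\norm{u}$, i.e.\ a measurable, tempered bound on the projections onto the Oseledets subspaces (equivalently on the angles between $E^{1,A}_x,\ldots,E^{l,A}_x$). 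Neither of your two suggested mechanisms supplies this. The geometric-series prefactor $\sum_n e^{(4\varepsilon-2\delta)\mid n\mid}$ is a deterministic constant, whereas the angle defect depends on the point $x$, is not uniformly bounded over $\Reg$, and has nothing to do with $\delta$, so it cannot be ``absorbed by shrinking $\delta$'' (shrinking $\delta$ only makes that constant larger). The alternative, ``a further rescaling of the norm captured by $K_\delta$'', is precisely the statement that needs proof: to include the projection norms in $K_\delta$ and still obtain \eqref{ineq: K delta growth} you must show that they vary sub-exponentially along regular orbits, which is not automatic and is nowhere argued in your outline.

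The paper closes exactly this point by a second, iterated application of \cite[Theorem 2]{DrF}: it produces tempered measurable functions $K^1,K^2,\ldots$ with $\norm{u_1}\le K^1(x)\norm{u}$ and $\norm{u_2+\cdots+u_l}\le K^1(x)\norm{u}$, then $\norm{u_2}\le K^2(x)\norm{u_2+\cdots+u_l}$, and so on, yielding $\norm{u_j}\le K^1(x)\cdots K^j(x)\norm{u}$; the temperedness of $C$ and of the $K^j$ (each satisfying a bound of the form $K^j(f^n(x))\le K^j(x)e^{\varepsilon\mid n\mid}$ with $\varepsilon$ small relative to $\delta$) is what gives both $\norm{u}_x\le K_\delta(x)\norm{u}$ and the growth estimate \eqref{ineq: K delta growth}, and hence \eqref{ineq: norm x Lyapunov norm operator}. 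Without identifying this ingredient your treatment of (iv) is incomplete. (A smaller remark of the same nature: the ``$n=0$ term'' argument only gives $\norm{u}^2\le l\sum_i\norm{u_i}^2\le l\,\norm{u}_x^2$ unless the non-orthogonality is addressed, though the paper itself also dismisses this inequality as trivial.)
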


\begin{proof}

In order to prove $i)$ we observe that for any $u\in E^{i,A}_x$,
\begin{displaymath}
\begin{split}
\norm{A(x)u}_{f(x),i}^2&= \sum _{n\in \mathbb{Z}}\norm{A^{n}_i(f(x))A(x)u}^2 e^{-2\lambda _i n -2\delta \mid n\mid}\\
&= \sum _{n\in \mathbb{Z}}\norm{A^{n+1}_i(x)u}^2 e^{-2\lambda _i n -2\delta \mid n\mid}\\
&= \sum _{n\in \mathbb{Z}}\norm{A^{n+1}_i(x)u}^2 e^{-2\lambda _i (n+1) -2\delta \mid n+1\mid} e^{2\lambda _i+ 2\delta (\mid n\mid -\mid n-1\mid )}.
\end{split}
\end{displaymath}
Consequently,
$$e^{(\lambda _i -\delta)}\norm{u}_{x,i}\leq \norm{A(x)u}_{f(x),i}\leq e^{(\lambda _i +\delta)}\norm{u}_{x,i} $$
which implies $i)$. Item $ii)$ is analogous. Indeed, we have that 
\begin{displaymath}
\begin{split}
\norm{A^n(x)u}^2_{f^n(x),l}&= \sum _{k=0}^{+\infty}\norm{A^{k}(f^n(x))A^n(x)u}^2 e^{\frac{2}{\delta} k}\\
&= \sum _{k=0}^{+\infty}\norm{A^{k+n}(x)u}^2 e^{\frac{2}{\delta}(k+n)}e^{-\frac{2}{\delta}n}\leq e^{-\frac{2}{\delta} n}\norm{u}^2_{x,l},\\
\end{split}
\end{displaymath}
for every $u\in E^{l,A}_x$.

In order to get $iii)$ one only have to observe that, for any $u\in \mathbb{R}^d$, 
\begin{displaymath}
\begin{split}
\norm{A^n(x)u}_{f^n(x)}^2&=\sum _{i=1}^l\norm{A^n(x)u_i}_{f^n(x),i}^2\\
&\leq   \sum _{i=1}^{l-1}e^{2(\lambda _i+\delta)n}\norm{u_i}_{x,i}^2 +e^{-2\frac{1}{\delta}n}\norm{u_l}_{x,l}^2\\
& \leq  e^{2(\lambda _1+\delta)n}  \sum _{i=1}^{l}\norm{u_i}_{x,i}^2=e^{2(\lambda _1+\delta)n} \norm{u}_{x}^2.
\end{split}
\end{displaymath}

The first inequality in $iv)$ is trivial. To prove the second one, we proceed analogously to what we did in Lemma \ref{lem: auxil 1}. Fix $i\in \{1,\ldots ,l-1\}$ and $\varepsilon> 0$ such that $2\varepsilon <\delta $. From Theorem 2 of \cite{DrF} it follows that there exists a measurable map $C:M\to (0,\infty)$ such that
$$\frac{1}{C(x)}e^{(\lambda _i-\varepsilon )n}\norm{u}\leq \norm{A^n_i(x)u}\leq C(x)e^{(\lambda _i+\varepsilon )n}\norm{u}$$
for every $n\in \mathbb{N}$ and $u\in E^{i,A}_{x}$. Equation \eqref{eq: comp norm lemma} from the proof of Lemma \ref{lem: auxil 1} tells us that
$$\frac{1}{C(x)}e^{-(\lambda _i+2\varepsilon )n}\norm{u}\leq \norm{A^{-n}_i(x)u}\leq C(x)e^{-(\lambda _i-2\varepsilon )n}\norm{u}$$
for every $n\in \mathbb{N}$. Combining this two equations we get that
\begin{displaymath}
\begin{split}
\norm{u}_{x,i}^2&=\sum _{n\in \mathbb{Z}}\norm{A^n_i(x)u}^2e^{-2\lambda_i n -2\delta \mid n\mid}\\
&\leq \sum _{n\in \mathbb{Z}}\left( C(x)e^{\lambda_i n + 2\varepsilon \mid n\mid}\norm{u}\right)^2e^{-2\lambda_i n -2\delta \mid n\mid}\\
&=C(x)^2\sum _{n\in \mathbb{Z}}e^{ (4\varepsilon -2\delta )\mid n\mid} \norm{u}^2.
\end{split}
\end{displaymath}
For $u\in E^{l,A}_x$, Theorem 2 of \cite{DrF} tells us that whenever $-\frac{1}{\delta} <\lambda _{l-1}$,
$$\norm{A^n(x)u}\leq C(x)e^{-\frac{1}{\delta}n}\norm{u}.$$
Thus, 
$$\norm{u}_{x,l}^2= \sum _{n\geq 0}\norm{A^n(x)u}^2e^{-2\frac{1}{\delta}n}\leq C(x)^2\sum _{n\geq 0}e^{-\frac{4}{\delta}n}\norm{u}^2.$$

Thus, taking $K=\max \{\sum _{n\in \mathbb{Z}}e^{ (4\varepsilon -2\delta )\mid n\mid}, \sum _{n\geq 0}e^{-\frac{4}{\delta}n} \}$ and writing $u\in \mathbb{R}^d$ as $u=u_1+\ldots +u_l $ where $u_i \in E^{i,A}_x$ for every $1\leq i\leq l$ we get that

$$\norm{u}_x^2=\sum_{i=1}^{l}\norm{u_i}_{x,i}^2\leq KC^2(x)\sum _{i=1}^{l}\norm{u_i}^2.$$

It remains to obtain an upper bound for $\lVert u_i\rVert$ in terms of $\lVert u\rVert$. This can be achieved by using the map $K$ given by Theorem 2 of \cite{DrF}. More precisely, let $K^1$ be the map given by \cite[Theorem 2]{DrF} applied for $i=1$ and sufficiently small $\epsilon >0$. We then have that
\begin{equation}\label{new1}
 \lVert u_1\rVert \le K^1(x)\lVert u\rVert \quad \text{and} \quad \lVert u_2+\ldots +u_{l}\rVert \le K^1(x)\lVert u\rVert .
\end{equation}
The first inequality in~\eqref{new1} gives a desired bound for $\lVert u_1\rVert$. In order to obtain the bound for $\lVert u_2\rVert$, we can apply again \cite[Theorem 2]{DrF}
but now for $i=2$ (and again for $\epsilon >0$ sufficiently small) to conclude that there exists $K^2$ such that
\begin{equation}\label{new2}
 \lVert u_2\rVert \le K^2(x)\lVert u_2+\ldots +u_{l}\rVert \quad \text{and} \quad \lVert u_3+\ldots +u_{l}\rVert \le K^2(x)\lVert u_2+\ldots +u_{l}\rVert.
\end{equation}
By combining the second inequality in~\eqref{new1} with the first inequality in~\eqref{new2}, we conclude that $\lVert u_2\rVert \le K^1(x)K^2(x)\lVert u\rVert$. By proceeding, one can establish desired bounds for all $\lVert u_j\rVert$, $j=1, \ldots, l$ and construct a function $K_\delta$ satisfying \eqref{ineq: K delta growth}. Indeed, this follows from the fact that $C(f^n(x))\leq C(x)e^{\varepsilon\mid n\mid}$ for every $n\in \mathbb{Z}$ and similarly for the maps $K^1,K^2,\ldots ,K^l$ completing the proof. 
\end{proof}

For any $N>0$, let $\Reg _{\delta ,N}$ be the set of regular points $x\in \Reg$ for which $K_{\delta}(x)\leq N$. Observe that $\mu(\Reg _{\delta, N})\to 1$ as $N\to +\infty$. Moreover, invoking Lusin's theorem we may assume without loss of generality that this set is compact and that the Lyapunov norm and the Oseledets splitting are continuous when restricted to it. 

As a final and simple remark about Lyapunov norms we observe that, in order to get a norm satisfying the properties given by the previous proposition, it is not necessary to use inner products. Indeed, for $x\in \Reg$, $\delta >0$ and $u=u_1+\ldots +u_l \in \R ^d$ where $u_i \in E^{i,A}_x$ for every $1\leq i\leq l$, defining 
\begin{displaymath}
\norm{u}_{x,\delta}=\sum _{i=1}^l\norm{u_i}_{x,\delta,i}
\end{displaymath}
where
\begin{equation*}
\norm{u_i}_{x,\delta,i}=\sum _{n\in \mathbb{Z}}\norm{A^{n}_i(x)u_i} e^{-\lambda _i n -\delta \mid n\mid}
\end{equation*}
for every $i $ for which $\lambda _i$ is finite and 
\begin{equation*}
\norm{u_l}_{x,\delta,l}= \sum _{n=0}^{+\infty}\norm{A^{n}(x)u_l}e^{\frac{1}{\delta}n}
\end{equation*}
in the case when $\lambda _l =-\infty$ gives rise to such a norm. In particular, this can be used to define Lyapunov norms for semi-invertible cocycles taking values on Banach spaces.

\section{Proof of the main result}

\subsection{Approximation of the largest Lyapunov exponent}

We start the proof of Theorem \ref{theo: main} with a key proposition which tells us that the largest Lyapunov exponent of $A$ may be approximated by Lyapunov exponents on periodic points. We retain all the notation introduced at the previous section.

\begin{proposition}\label{prop: apr largest exponent}
For every $\delta >0$ small enough, there exists a periodic point $p\in M$ such that
\begin{equation}\label{eq: aprox largest}
\mid \lambda _1 -\lambda_1(p)\mid <\delta.
\end{equation} 
More precisely, there exists $\delta _0>0$ such that for any $N>0$ and $\delta \in (0,\delta _0)$, there exist $n_0\in \N$ and $\rho >0$ such that, for every $n\geq n_0$, if $x, f^{n}(x)\in \Reg _{\delta ,N}$ are such that $d(x,f^n(x))<\rho $ and $p$ is a periodic point associated to $x$ by the Anosov Closing property then
\begin{equation*}
\mid \lambda _1 -\lambda_1(p)\mid <\delta.
\end{equation*} 
\end{proposition}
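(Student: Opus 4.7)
The plan is to prove the quantitative (``more precisely'') assertion; the first assertion then follows by applying Poincaré recurrence to the compact set $\Reg_{\delta, N}$, which has positive $\mu$-measure for $N$ large, producing pairs $(x, n)$ with $x, f^n(x) \in \Reg_{\delta, N}$ and $d(x, f^n(x)) < \rho$ for every prescribed $\rho > 0$. Fix $\delta_0 < \alpha\theta/2$, $\delta \in (0, \delta_0)$, and $N > 0$. Given $x, n$ as in the hypothesis, the Anosov closing property yields a periodic point $p$ with $f^n(p) = p$ and $d(f^j(p), f^j(x)) \leq C_1 e^{-\theta \min(j, n-j)} \rho$. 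The strategy is to compare $A^n(p)$ with $A^n(x)$ in the Lyapunov norms at the iterates of $x$, and extract both bounds on $\lambda_1(p)$ from this comparison.

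The main estimate is the submultiplicativity bound
$$\norm{A^n(p)}_{f^n(x) \leftarrow x} \leq \prod_{j=0}^{n-1}\norm{A(f^j(p))}_{f^{j+1}(x) \leftarrow f^j(x)},$$
together with a factor-by-factor comparison. By $\alpha$-Hölder continuity, $\norm{A(f^j(p)) - A(f^j(x))} \leq C_2 C_1^\alpha \rho^\alpha e^{-\alpha\theta \min(j, n-j)}$. Using \eqref{ineq: norm x Lyapunov norm operator} to convert operator norms to Lyapunov norms, and observing that $x, f^n(x) \in \Reg_{\delta, N}$ forces $K_\delta(f^j(x)) \leq N e^{\delta \min(j, n-j)}$ (by applying \eqref{ineq: K delta growth} forward from $x$ and backward from $f^n(x)$), together with property iii) which bounds $\norm{A(f^j(x))}_{f^{j+1}(x) \leftarrow f^j(x)} \leq e^{\lambda_1 + \delta}$, each factor is at most $e^{\lambda_1 + \delta}\bigl(1 + c\,\rho^\alpha e^{-(\alpha\theta - \delta)\min(j, n-j)}\bigr)$. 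Since $\alpha\theta - \delta > \alpha\theta/2 > 0$, the total correction $\prod_j(1+\epsilon_j) \leq \exp\bigl(\sum_j \epsilon_j\bigr)$ is bounded by some constant $M(\rho)$ with $M(\rho) \to 1$ as $\rho \to 0$, uniformly in $n$. Converting back to operator norm using $K_\delta(x), K_\delta(f^n(x)) \leq N$, this yields $\norm{A^n(p)} \leq N^2 M(\rho) e^{(\lambda_1 + \delta)n}$. Iterating over periods via $A^{kn}(p) = (A^n(p))^k$ and applying Gelfand's formula gives $\rho(A^n(p)) \leq e^{(\lambda_1 + \delta)n}$ once $n$ is large enough that the prefactor is absorbed, and hence $\lambda_1(p) \leq \lambda_1 + \delta$.

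For the matching lower bound, the same telescoping applied to a unit vector $v \in E^{1,A}_x$, starting from the lower estimate $\norm{A^n(x) v}_{f^n(x), 1} \geq e^{(\lambda_1 - \delta)n}$ supplied by property i), produces $\norm{A^n(p) v}_{f^n(x)} \geq \tfrac12 e^{(\lambda_1 - \delta)n}$ for $\rho$ sufficiently small. The main obstacle, and where the semi-invertible setting requires extra care, is that this is only a bound on the operator norm of $A^n(p)$, while $\lambda_1(p) = \tfrac1n \log \rho(A^n(p))$. To bridge the gap I would exploit the continuity of the Oseledets splitting on $\Reg_{\delta, N}$ (provided by the Lusin-type reduction at the end of Section 3) together with $d(x, f^n(x)) < \rho$ to conclude that $E^{1,A}_{f^n(x)}$ lies in a small angular neighborhood of $E^{1,A}_x$, and then run a standard invariant-cone argument: the spectral gap $\lambda_1 > \lambda_2$ of the unperturbed $A^n(x)$, combined with the smallness of the perturbation $A^n(p) - A^n(x)$, guarantees that $A^n(p)$ preserves a small cone about $E^{1,A}_x$ and expands vectors in it by at least $e^{(\lambda_1 - 2\delta)n}$. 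Iterating across periods keeps $(A^n(p))^k v$ inside the cone and gives $\norm{(A^n(p))^k v}^{1/k} \to r$ with $r \geq e^{n(\lambda_1 - 2\delta)}$, hence $\rho(A^n(p)) \geq e^{(\lambda_1 - 2\delta)n}$ and $\lambda_1(p) \geq \lambda_1 - 2\delta$. After a harmless relabeling of $\delta$, these two inequalities yield the claim.
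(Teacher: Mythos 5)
Your upper bound follows the paper's route essentially verbatim: telescope $\norm{A^n(p)}_{f^n(x)\leftarrow x}$ into single-step Lyapunov-norm factors, bound each by $e^{\lambda_1+\delta}\bigl(1+c\rho^{\alpha}e^{(\delta-\theta\alpha)\min\{j,n-j\}}\bigr)$, sum the geometric corrections, and use submultiplicativity over periods. That half is fine (modulo also requiring $-1/\delta_0<\lambda_1$ so that property iii) is available, and $\delta_0<(\lambda_1-\lambda_2)/4$, which your choice $\delta_0<\alpha\theta/2$ omits).

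The lower bound, however, has a genuine gap. First, the claim that ``the same telescoping'' gives $\norm{A^n(p)v}\ge \frac{1}{2}e^{(\lambda_1-\delta)n}$ for $v\in E^{1,A}_x$ is false as stated: writing $A^n(p)v=A^n(x)v+\sum_{j=0}^{n-1}A^{n-j-1}(f^{j+1}(x))B_jA^j(p)v$ with $B_j=A(f^j(p))-A(f^j(x))$, the blocks surrounding each $B_j$ can only be bounded \emph{above}, at rate $e^{(\lambda_1+\delta)}$ per step, so the accumulated error is of order $\rho^{\alpha}e^{(\lambda_1+\delta)n}$, which swamps the main term $e^{(\lambda_1-\delta)n}$ as $n\to\infty$ no matter how small $\rho$ is (and $\rho$ must be chosen uniformly for all $n\ge n_0$). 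Second, the repair you propose rests on the premise that ``the perturbation $A^n(p)-A^n(x)$ is small''; it is not --- only the single-step differences $B_j$ are small, and their product difference over $n$ steps is comparable to $\norm{A^n(x)}$ itself. Consequently the cone argument cannot be set up as a one-shot perturbation of the $n$-step matrix. It must be propagated one iterate at a time along the orbit of $x$: define cones $C^j$ around $E^{1,A}_{f^j(x)}$ in the Lyapunov norm and prove $A(f^j(p))C^j_0\subset C^{j+1}_{\gamma}$ together with single-step expansion of the $E^1$-component by $e^{\lambda_1-2\delta}$, using that $\norm{B_j}_{f^{j+1}(x)\leftarrow f^{j+1}(x)}\le CN^2\rho^{\alpha}e^{(2\delta-\theta\alpha)\min\{j,n-j\}}$ decays (this is where $\delta_0<\theta\alpha/4$ enters) while the unperturbed map separates $E^1$ from its complement at rates $e^{\lambda_1-\delta}$ versus $e^{\lambda_2+\delta}$ (whence the restriction $\delta_0<(\lambda_1-\lambda_2)/4$, with a separate estimate when $\lambda_2=-\infty$). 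Only after this step-by-step control does one obtain $A^n(p)C^0_0\subset C^0_0$ (via continuity of the splitting and of the Lyapunov norm on $\Reg_{\delta,N}$ and $d(x,f^n(x))<\rho$) and the iterated expansion $\norm{A^{kn}(p)u}\ge 4^{-k}e^{kn(\lambda_1-2\delta)}\norm{u}$ that yields $\lambda_1(p)\ge\lambda_1-3\delta$. Your instinct that an invariant cone is the right tool is correct; the missing content is precisely the iterate-by-iterate construction of that cone field.
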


Let $C_1, \varepsilon _0, \theta >0$ be given by the Anosov Closing Property, $\rho \in (0,\varepsilon _0)$ and suppose $d(x,f^n(x))<\rho$. Thus, there exists a periodic point $p\in M$ of period $n$ such that
\begin{displaymath}
d(f^j(x),f^j(p))\leq C_1 e^{-\theta \min\lbrace j, n-j\rbrace}d(f^n(x),x)\leq C_1 \rho e^{-\theta \min\lbrace j, n-j\rbrace}
\end{displaymath}
for every $j=0,1,\ldots ,n$. We will prove that, as long as $n$ is sufficiently large, this periodic point satisfies the previous proposition. We split the proof into two lemmas. In the first one we give a lower bound for $\lambda _1(p)$ in terms of $\lambda _1$ while in the second one an upper bound is given. 

Consider $0<\delta _0 < \frac{1}{4}\min \{\theta \alpha , (\lambda _1 -\lambda _2) \}$ if $A$ has at least two different Lyapunov exponents and moreover $\lambda _2>-\infty$ and let $0< \delta _0 <\frac{1}{4}\theta \alpha$ otherwise. Assume also that $\delta _0$ is small enough so that $-\frac{1}{\delta _0}<\lambda _{l-1}$. Fix $N>0$ and $\delta \in (0,\delta _0)$ and suppose $x, f^n(x)\in \Reg _{\delta , N}$.

\begin{lemma}[Lower bound]
There exists $n_0\in \N$ such that, if $\rho \in (0,\varepsilon _0)$ is sufficiently small and $n\geq n_0$ then
\begin{equation}\label{eq: aprox largest }
\lambda_1(p)\geq \lambda _1 -\delta.
\end{equation} 
\end{lemma}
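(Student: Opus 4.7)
The plan is to pick a unit vector $u \in E^{1,A}_x$ where $A^n(x)$ nearly maximally expands, transfer this expansion to $A^n(p)$ via a telescoping identity, and bootstrap to a lower bound on the spectral radius $\rho(A^n(p))=e^{n\lambda_1(p)}$. First, combining property~(i) of the Lyapunov norm with property~(iv) and the hypothesis $x,f^n(x)\in\Reg_{\delta,N}$, I would obtain $\|A^n(x)u\|\ge N^{-1}e^{(\lambda_1-\delta)n}$.

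Next I would write the telescoping identity
\[
A^n(x)-A^n(p)=\sum_{j=0}^{n-1}A^{n-1-j}(f^{j+1}(p))\bigl[A(f^j(x))-A(f^j(p))\bigr]A^j(x),
\]
apply it to $u$, and estimate each summand: the middle factor by $C_2C_1^\alpha\rho^\alpha e^{-\theta\alpha\min\{j,n-j\}}$ via H\"older continuity and the Anosov Closing property; the partial product $A^j(x)$ by property~(iii) converted to operator norm using the forward growth of $K_\delta$ from $x$; and the symmetric product $A^{n-1-j}(f^{j+1}(x))$ by property~(iii) with the backward growth of $K_\delta$ from $f^n(x)$ afforded by~\eqref{ineq: K delta growth}. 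The periodic-orbit product $A^{n-1-j}(f^{j+1}(p))$ would be compared to $A^{n-1-j}(f^{j+1}(x))$ by a secondary recursive telescoping; the condition $\delta<\theta\alpha/4$ ensures that each error contribution carries an extra decay factor so that the sum $\sum_j e^{-\theta\alpha\min\{j,n-j\}}$ is uniformly bounded in $n$, yielding $\|(A^n(p)-A^n(x))u\|\le\tfrac12\|A^n(x)u\|$ for $\rho$ small and $n\ge n_0$.

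From $\|A^n(p)u\|\ge(2N)^{-1}e^{(\lambda_1-\delta)n}$ and $A^n(x)u\in E^{1,A}_{f^n(x)}$, continuity of the Oseledets splitting on the compact set $\Reg_{\delta,N}$ (together with $d(x,f^n(x))<\rho$) shows that $A^n(p)u$ lies in a thin cone around $E^{1,A}_x$. Using the spectral gap $\lambda_1>\lambda_2+4\delta$ (or the easier case when $\lambda_l=-\infty$ or $l=1$, where the cone argument simplifies), this cone is forward-invariant under $A^n(p)$; the same estimate applied to the unit vectors $A^n(p)^k u/\|A^n(p)^k u\|$ in place of $u$ then gives $\|A^n(p)^k u\|\ge c\,e^{(\lambda_1-\delta)nk}$ with $c>0$ independent of $k$. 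Gelfand's formula now yields $\rho(A^n(p))=\lim_k\|A^n(p)^k\|^{1/k}\ge e^{(\lambda_1-\delta)n}$, whence $\lambda_1(p)=\tfrac{1}{n}\log\rho(A^n(p))\ge\lambda_1-\delta$.

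The main obstacle will be the telescoping bound: the product $A^{n-1-j}(f^{j+1}(p))$ runs along the periodic orbit, which is not in general $\mu$-regular, so the Lyapunov-norm estimates of property~(iii) are not directly available. A bootstrap comparison with $A^{n-1-j}(f^{j+1}(x))$ over the regular orbit, exploiting the Anosov Closing bound $d(f^k(p),f^k(x))\le C_1\rho e^{-\theta\min\{k,n-k\}}$ and H\"older continuity, is what keeps the error uniform in $n$ for a fixed $\rho$; the geometric condition $\delta<\theta\alpha/4$ on the parameters is essential so that the error terms summed along the orbit remain controlled.
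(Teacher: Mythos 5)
Your overall strategy (send a vector of $E^{1,A}_x$ along the periodic orbit, trap the images in a cone, and conclude through the spectral radius of $A^n(p)$, i.e. $\lambda_1(p)=\tfrac1n\log\rho(A^n(p))$) is the same as the paper's, and your first step and final step are fine in principle. The gap is in the central comparison: you transfer the expansion from the $x$-orbit to the $p$-orbit by a \emph{single} telescoping over the whole period. Each summand there is controlled by $\lVert A^{n-1-j}(f^{j+1}(p))\rVert\,\lVert B_j\rVert\,\lVert A^j(x)u\rVert$, and the best bounds available (your ``secondary telescoping'' for the $p$-block can at most reproduce the paper's upper-bound lemma on a segment) are $\lVert A^{n-1-j}(f^{j+1}(p))\rVert\lesssim e^{(\lambda_1+\delta)(n-1-j)}$ and $\lVert A^j(x)u\rVert\lesssim N e^{(\lambda_1+\delta)j}$. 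Summing the H\"older/closing factor $\rho^\alpha e^{-\theta\alpha\min\{j,n-j\}}$ then gives $\lVert (A^n(p)-A^n(x))u\rVert\lesssim C N^3\rho^\alpha e^{(\lambda_1+\delta)n}$, whereas the main term only satisfies $\lVert A^n(x)u\rVert\gtrsim N^{-1}e^{(\lambda_1-\delta)n}$. The ratio is of order $\rho^\alpha e^{2\delta n}$, unbounded in $n$ for fixed $\rho$; so the key claim $\lVert(A^n(p)-A^n(x))u\rVert\le\tfrac12\lVert A^n(x)u\rVert$ — and likewise the forward invariance of a thin cone under the single map $A^n(p)$, and the uniform constant $c$ in $\lVert A^n(p)^k u\rVert\ge c\,e^{(\lambda_1-\delta)nk}$ — cannot be obtained with $\rho$ and $n_0$ quantified as in the statement ($\rho$ independent of $n$). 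This $e^{2\delta n}$ loss is intrinsic to any one-shot comparison, because the Lyapunov norm only controls rates up to the slack $\pm\delta$; and demanding $\rho\lesssim e^{-2\delta n/\alpha}$ instead would ruin the application, where $p=p_k$ comes from a recurrence time $n_k$ bearing no controlled relation to $\rho=1/k$.

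The paper circumvents exactly this by performing the comparison one iterate at a time: it introduces cones $C^j_\gamma$ around $E^{1,A}_{f^j(x)}$ measured in the Lyapunov norms along the $x$-orbit and shows that each single matrix $A(f^j(p))$ maps $C^j_0$ into $C^{j+1}_\gamma$ while expanding the $E$-component by at least $e^{\lambda_1-2\delta}$. There the per-step error $\tilde C\rho^\alpha\lVert u^j_E\rVert_j$ only has to beat the per-step gap $e^{\lambda_1-\delta}-e^{\lambda_2+\delta}$ (or $e^{\lambda_1-\delta}-e^{-1/\delta}$ when $\lambda_2=-\infty$), a condition on $\rho$ alone; uniformity in $j$ and $n$ comes from $\delta<\tfrac14\theta\alpha$, which makes $e^{2\delta\min\{j,n-j\}}$ (growth of $K_\delta$ from the two regular endpoints) lose against $e^{-\theta\alpha\min\{j,n-j\}}$. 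Continuity of the splitting and of the norms on $\Reg_{\delta,N}$ then gives $C^n_\gamma\subset C^0_0$, so $A^n(p)$ preserves $C^0_0$ and its iterates expand at rate $\lambda_1-2\delta$, which is precisely the uniform-in-$n$ statement your argument is missing. If you insist on the telescoping formulation, it must be applied in blocks of length one inside such a cone field — at which point it becomes the paper's proof.
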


\begin{proof}
For each $1\leq j\leq n$, let us consider the splitting $\R ^d=E^{1,A}_{f^j(x)}\oplus F_{f^j(x)}$ where $F_{f^j(x)}=E^{2,A}_{f^j(x)}\oplus \ldots \oplus E^{l,A}_{f^j(x)}$ and write $u\in \R ^d$ as $u=u^j_E+u^j_F$ where $u^j_E\in E^{1,A}_{f^j(x)}$ and $u^j_F\in F_{f^j(x)}$. Then the \textit{cone} of radius $1-\gamma > 0$ around $E^{1,A}_{f^j(x)}$ is defined as  
\begin{displaymath}
C^j_{\gamma }=\left\{ u^j_E+u^j_F\in E^{1,A}_{f^j(x)}\oplus F_{f^j(x)}; \; \norm{u^j_F}_{f^j(x)}\leq (1-\gamma) \norm{u^j_E}_{f^j(x)}\right\}.
\end{displaymath}
To simplify notation we write $\norm{.}_j$ for the Lyapunov norm at the point $f^j(x)$.

We claim now that it is enough to prove that if $\rho $ is sufficiently small then there exists $\gamma \in (0,1)$ such that 
\begin{equation}\label{ineq: cone}
A(f^j(p))(C^j_{0})\subset C^{j+1}_{\gamma}
\end{equation}
and for every $u\in C^j_0$,
\begin{equation}\label{ineq: Lyap norm cone}
\norm{(A(f^j(p))u)^{j+1}_E}_{j+1}\geq e^{\lambda _1 -2\delta}\norm{u^j_E}_j.
\end{equation}
Indeed, since we are assuming that the Oseledets splitting and the Lyapunov norm are continuous on $\Reg _{\delta ,N}$, it follows that if $\rho $ is sufficiently small (and consequently $x$ and $f^n(x)$ are close) then $C^n_{\gamma}\subset C^0_0$ and thus by \eqref{ineq: cone}, $A^n(p)(C^0_{0})\subset C^{0}_{0}$. Consequently, for any $u\in C^0_0$ and $k\in \N$ we have that $A^{kn}(p)u\in C^0_0$. Therefore, given $u\in C^0_0$, invoking \eqref{ineq: Lyap norm cone} and the fact that the Lyapunov norms at $x$ and $f^n(x)$ are close whenever $\rho$ is small, 
\begin{displaymath}
\begin{split}
\norm{A^n(p)u}_n & \geq \norm{(A^n(p)u)^n_E}_n\geq  e^{n(\lambda _1 -2\delta)}\norm{u^0_E}_0 \\
&\geq \frac{1}{2} e^{n(\lambda _1 -2\delta)}\norm{u}_0\geq \frac{1}{4} e^{n(\lambda _1 -2\delta)}\norm{u}_n
\end{split}
\end{displaymath}
which applied $k$ times leads us to
\begin{displaymath}
\norm{A^{kn}(p)u}_n  \geq \frac{1}{4^k} e^{kn(\lambda _1 -2\delta)}\norm{u}_n.
\end{displaymath}
Consequently, 
\begin{displaymath}
\begin{split}
\lambda _1(p)&\geq \lim _{k\to \infty} \frac{1}{kn}\log \left(\norm{A^{kn}(p)u}_n \right) \geq \lim _{k\to \infty} \frac{1}{kn} \log \left( \frac{1}{4^k} e^{kn(\lambda _1 -2\delta)}\norm{u}_n \right)\\
&=\lambda _1-2\delta- \frac{\log 4}{n} +\frac{1}{n}\lim _{k\to \infty}\frac{1}{k}\log \left(\norm{u}_n\right) \geq \lambda_1 -3\delta
\end{split}
\end{displaymath}
as long as $n$ is large enough which proves our claim. So, the only thing left to do is to prove \eqref{ineq: cone} and \eqref{ineq: Lyap norm cone}. Assume initially that $\lambda _2>-\infty$.

Given $u\in C^j_0$ let us consider $v=A(f^j(x))u$. Then, it follows from \eqref{ineq: Lyapunov norm} that $\norm{v}_{j+1}\leq e^{\lambda _1 +\delta}\norm{u}_j$ and moreover that 
\begin{displaymath}
\norm{v^{j+1}_E}_{j+1}=\norm{A(f^j(x))u^j_E}_{j+1}\geq e^{\lambda _1 -\delta}\norm{u^j_E}_j
\end{displaymath}
and
\begin{equation}\label{eq: norm vjF}
\norm{v^{j+1}_F}_{j+1}=\norm{A(f^j(x))u^j_F}_{j+1}\leq e^{\lambda _2 +\delta}\norm{u^j_F}_j.
\end{equation}

Let $w=A(f^j(p))u$. What we want to do now is to compare the Lyapunov norms of $w$ and its projection on $E^{1,A}_{f^{j+1}(x)}$ and $F_{f^{j+1}(x)}$ with the respective norms of $v$. In order to do it, let us consider $B_j=A(f^j(p))-A(f^j(x))$. Consequently, $w=v+B_ju$ and thus $w^{j+1}_E=v^{j+1}_E+ (B_ju)^{j+1}_E$ and $w^{j+1}_F=v^{j+1}_F+ (B_ju)^{j+1}_F$. Moreover, for every $0\leq j\leq n$,
\begin{displaymath}
\begin{split}
\norm{B_j}&=\norm{A(f^j(p))-A(f^j(x))}\leq C_2 d(f^j(p),f^j(x))^{\alpha} \\
& \leq C_1 C_2 \rho ^{\alpha}e^{-\theta \alpha \min\{ j, n-j\}}.
\end{split}
\end{displaymath}
Therefore, invoking \eqref{ineq: norm x Lyapunov norm operator} and \eqref{ineq: norm x Lyapunov norm} it follows that
\begin{displaymath}
\norm{B_ju}_{j+1}  \leq \norm{B_j}_{f^{j+1}(x)\leftarrow f^{j+1}(x)}\norm{u}_{j+1}\leq K_{\delta}(f^{j+1}(x))^2\norm{B_j}\norm{u}.
\end{displaymath}
Then, using that $ K_{\delta}(f^{j+1}(x)) \leq N e^{\delta \min\{ j+1, n-j-1\}}$ which follows from \eqref{ineq: K delta growth} and the fact that $x$ and $f^n(x)$ are in $\Reg _{\delta,N}$ and that $\norm{u}_j\leq 2\norm{u_E^j}_j$ since $u\in C^j_0$, we get
\begin{displaymath}
\begin{split}
\norm{B_ju}_{j+1} &\leq  N^2 e^{2\delta\min\{ j+1, n-j-1\}} C_1 C_2 \rho ^{\alpha}e^{-\theta \alpha \min\{ j, n-j\}}\norm{u}_j\\
&\leq  C_1 C_2 N^2 \rho ^{\alpha} e^{2\delta\min\{ j+1, n-j-1\}} e^{-\theta \alpha \min\{ j, n-j\}}2\norm{u^j_E}_j\\
& \leq C \rho ^{\alpha}e^{(2\delta -\theta \alpha) \min\{ j, n-j\}}\norm{u^j_E}_j.
\end{split}
\end{displaymath}
Thus, since $2\delta -\theta \alpha <0$ we get that $\norm{B_ju}_{j+1}\leq \tilde{C}\rho ^{\alpha}\norm{u^j_E}_j$ for some $\tilde{C}>0$ independent of $n$ and $j$. Consequently,
\begin{displaymath}
\begin{split}
\norm{w^{j+1}_E}_{j+1} & \geq \norm{v^{j+1}_E}_{j+1} -\norm{(B_ju)^{j+1}_E}_{j+1}\\
&\geq e^{\lambda _1 -\delta}\norm{u^j_E}_j -\tilde{C}\rho ^{\alpha}\norm{u^j_E}_j \geq  e^{\lambda _1 -2\delta}\norm{u^j_E}_j
\end{split}
\end{displaymath}
whenever $\rho$ is small enough which is precisely inequality \eqref{ineq: Lyap norm cone}. To get \eqref{ineq: cone} we observe initially that, analogously to the previous inequality we can get
\begin{equation}\label{aux: lower 1}
\norm{w^{j+1}_E}_{j+1}  \leq  e^{\lambda _1 +\delta}\norm{u^j_E}_j +\tilde{C}\rho ^{\alpha}\norm{u^j_E}_j \leq  \hat{C}\norm{u^j_E}_j.
\end{equation}
On the other hand,
\begin{displaymath}
\norm{w^{j+1}_E}_{j+1}  \geq \norm{v^{j+1}_E}_{j+1}  -\norm{B_ju}_{j+1}
\end{displaymath}
and
\begin{displaymath}
\norm{w^{j+1}_F}_{j+1}  \leq \norm{v^{j+1}_F}_{j+1}  +\norm{B_ju}_{j+1}. 
\end{displaymath}
Therefore, combining this inequalities and using again that $u\in C^j_0$,
\begin{displaymath}
\begin{split}
\norm{w^{j+1}_E}_{j+1} - \norm{w^{j+1}_F}_{j+1} & \geq \norm{v^{j+1}_E}_{j+1} -\norm{v^{j+1}_F}_{j+1}  -2\norm{B_ju}_{j+1}\\
&\geq e^{\lambda _1-\delta}\norm{u^{j}_E}_{j} - e^{\lambda _2+\delta}\norm{u^{j}_F}_{j} -2\tilde{C}\rho ^{\alpha}\norm{u^j_E}_j\\
&\geq \left( e^{\lambda _1-\delta} - e^{\lambda _2+\delta} -2\tilde{C}\rho ^{\alpha}\right) \norm{u^j_E}_j.
\end{split}
\end{displaymath}
Taking $\rho$ small enough so that $e^{\lambda _1-\delta} - e^{\lambda _2+\delta} -2\tilde{C}\rho ^{\alpha}>0$ and applying \eqref{aux: lower 1} to the previous inequality we get that there exists $\gamma >0 $ such that 
$\norm{w^{j+1}_E}_{j+1} - \norm{w^{j+1}_F}_{j+1}\geq \gamma \norm{w^{j+1}_E}_{j+1}$ which implies that $w=A(f^j(p))u\in C^{j+1}_{\gamma}$ proving \eqref{ineq: cone} and consequently the lemma whenever $\lambda _2>-\infty$. The case when $\lambda _2=-\infty$ is analogous. The only difference is that inequality \eqref{eq: norm vjF} becomes
\begin{displaymath}
\norm{v^{j+1}_F}_{j+1}=\norm{A(f^j(x))u^j_F}_{j+1}\leq e^{-\frac{1}{\delta}}\norm{u^j_F}_j.
\end{displaymath}
\end{proof}

\begin{lemma}[Upper bound]
There exists a constant $c>0$ such that 
\begin{equation}
\norm{A^n(p)}_{f^n(x)\leftarrow x}\leq c e^{\rho ^\alpha} e^{(\lambda _1+\delta)n}
\end{equation}
and
\begin{equation}
\norm{A^n(p)}\leq c N e^{\rho ^{\alpha}} e^{(\lambda _1+\delta)n}.
\end{equation}
Consequently, if $\rho >0$ is sufficiently small and $n$ is large enough,
\begin{displaymath}
\lambda _1(p)\leq \lambda _1 +2\delta.
\end{displaymath}
\end{lemma}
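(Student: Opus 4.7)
The plan is to pass to the Lyapunov norm and control $A^n(p)$ factor by factor by comparing it to $A^n(x)$; once an upper bound for $\norm{A^n(p)}$ is in hand, periodicity plus Gelfand's formula converts it into a bound on $\lambda_1(p)$. Unlike the lower bound, no invariant cone is required---only submultiplicativity of the Lyapunov operator norm together with \eqref{ineq: norm}.

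First I would set $B_j=A(f^j(p))-A(f^j(x))$. H\"older continuity combined with the Anosov Closing estimate yields $\norm{B_j}\le C_1C_2\rho^{\alpha}e^{-\theta\alpha\min\{j,n-j\}}$. Translating to the Lyapunov operator norm via \eqref{ineq: norm x Lyapunov norm operator}, and using the controlled growth $K_\delta(f^{j+1}(x))\le Ne^{\delta\min\{j+1,n-j-1\}}$ provided by \eqref{ineq: K delta growth} together with $x,f^n(x)\in\Reg_{\delta,N}$, I obtain
\[
\norm{B_j}_{f^{j+1}(x)\leftarrow f^j(x)}\le \tilde{C}\rho^{\alpha}e^{(\delta-\theta\alpha)\min\{j,n-j\}}.
\]
Since $\delta<\delta_0\le \theta\alpha/4$, the exponent $\delta-\theta\alpha$ is strictly negative.

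Combining this with $\norm{A(f^j(x))}_{f^{j+1}(x)\leftarrow f^j(x)}\le e^{\lambda_1+\delta}$ from \eqref{ineq: norm} and the elementary inequality $1+t\le e^t$, I factor out the dominant rate:
\[
\norm{A(f^j(p))}_{f^{j+1}(x)\leftarrow f^j(x)}\le e^{\lambda_1+\delta}\exp\bigl(\tilde{C}'\rho^{\alpha}e^{(\delta-\theta\alpha)\min\{j,n-j\}}\bigr).
\]
Multiplying across $j=0,\ldots,n-1$ and noting that $\sum_{j=0}^{n-1}e^{(\delta-\theta\alpha)\min\{j,n-j\}}$ is majorized by twice a convergent geometric series of ratio $e^{\delta-\theta\alpha}<1$, hence bounded uniformly in $n$, I arrive at $\norm{A^n(p)}_{f^n(x)\leftarrow x}\le ce^{\rho^{\alpha}}e^{(\lambda_1+\delta)n}$ after absorbing the summed constant into $c$. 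The second inequality follows from \eqref{ineq: norm x Lyapunov norm} and $K_\delta(x)\le N$, which together give $\norm{A^n(p)}\le N\norm{A^n(p)}_{f^n(x)\leftarrow x}$.

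For the final conclusion I use that $p$ has period $n$, so $A^{kn}(p)=(A^n(p))^k$ and hence $\norm{A^{kn}(p)}\le (cNe^{\rho^{\alpha}})^k e^{(\lambda_1+\delta)kn}$. Dividing by $kn$, taking logs and sending $k\to\infty$ yields
\[
\lambda_1(p)\le \lambda_1+\delta+\frac{1}{n}\log\bigl(cNe^{\rho^{\alpha}}\bigr),
\]
and choosing $n$ large enough makes the last term smaller than $\delta$. The main technical point---less delicate than in the lower bound---is ensuring that the accumulated perturbation remains uniformly $O(\rho^{\alpha})$ in $n$; this hinges on the Hölder-closing rate $\theta\alpha$ beating the Lyapunov-norm distortion rate $\delta$ by a definite margin, which is exactly what the choice of $\delta_0$ was designed to guarantee.
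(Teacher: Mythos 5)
Your argument is correct and follows essentially the same route as the paper: the same perturbative factorization $A(f^j(p))=A(f^j(x))+B_j$, the same H\"older--Anosov-closing bound on $\norm{B_j}$ converted to the Lyapunov operator norm, the same use of $1+t\le e^t$ and the convergent geometric sum to get a constant independent of $n$, and the same passage from $\norm{A^n(p)}$ to $\lambda_1(p)$ via periodicity. Your explicit justification of $\lambda_1(p)\le \frac{1}{n}\log\norm{A^n(p)}$ through $A^{kn}(p)=(A^n(p))^k$ is slightly more detailed than the paper's, but the proof is the same.
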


\begin{proof} Let us consider $B_j=A(f^j(p))-A(f^j(x))$. As in the proof of the previous lemma we have that, for every $0\leq j\leq n$,
\begin{displaymath}
\begin{split}
\norm{B_j}& \leq C_1 C_2 \rho ^{\alpha}e^{-\theta \alpha \min\{ j, n-j\}} =C\rho ^{\alpha}e^{-\theta \alpha \min\{ j, n-j\}}
\end{split}
\end{displaymath}
and
\begin{equation}\label{ineq: norm of B j}
\begin{split}
\norm{B_j}_{f^j(x)\leftarrow f^j(x)} \leq K_{\delta}(f^j(x))\norm{B_j} \leq N C\rho ^{\alpha}e^{(\delta-\theta \alpha) \min\{ j, n-j\}}.
\end{split}
\end{equation}

Our objective now is to estimate $\norm{A^n(p)}_{f^n(x)\leftarrow x}$. We start observing that
\begin{displaymath}
\begin{split}
\norm{A^n(p)}_{f^n(x)\leftarrow x}&=\norm{A(f^{n-1}(p))\cdot \ldots \cdot A(p)}_{f^n(x)\leftarrow x}\\
&= \norm{(A(f^{n-1}(x))+B_{n-1})\cdot \ldots \cdot (A(x)+B_0)}_{f^n(x)\leftarrow x}\\
&\leq \norm{A(f^{n-1}(x))+B_{n-1}}_{f^{n}(x)\leftarrow f^{n-1}(x)}\cdot \ldots \cdot \norm{A(x)+B_{0}}_{f(x)\leftarrow x}
\end{split}
\end{displaymath}
and, for each $0\leq j< n$,
\begin{displaymath}
\norm{A(f^{j}(x))+B_{j}}_{f^{j+1}(x)\leftarrow f^{j}(x)}\leq \norm{A(f^{j}(x))}_{f^{j+1}(x)\leftarrow f^{j}(x)} +\norm{B_{j}}_{f^{j+1}(x)\leftarrow f^{j}(x)}.
\end{displaymath}
Thus, since from \eqref{ineq: norm}
\begin{displaymath}
\norm{A(f^{j}(x))}_{f^{j+1}(x)\leftarrow f^{j}(x)} \leq e^{(\lambda _1 +\delta )},
\end{displaymath}
invoking \eqref{ineq: norm of B j} we get that
\begin{displaymath}
\begin{split}
\norm{A(f^{j}(x))+B_{j}}_{f^{j+1}(x)\leftarrow f^{j}(x)}& \leq e^{(\lambda _1 +\delta )} +CN\rho ^{\alpha}e^{(\delta-\theta \alpha) \min\{ j, n-j\}}  \\
& = e^{(\lambda _1 +\delta )} (1+ CN \rho ^{\alpha} e^{-(\lambda _1 +\delta )}e^{(\delta-\theta \alpha) \min\{ j, n-j\}}).
\end{split}
\end{displaymath}
Making $\tilde{C}=CN e^{-(\lambda _1 +\delta )}$ and using the fact that $1+y\leq e^y$ for every $y\geq 0$ it follows that
\begin{displaymath}
\norm{A(f^{j}(x))+B_{j}}_{f^{j+1}(x)\leftarrow f^{j}(x)} \leq e^{(\lambda _1 +\delta )} \exp (\tilde{C}\rho ^{\alpha} e^{(\delta-\theta \alpha) \min\{ j, n-j\}}).
\end{displaymath}
Consequently,
\begin{displaymath}
\begin{split}
\norm{A^n(p)}_{f^n(x)\leftarrow x}& \leq \prod _{j=0} ^{n-1}  e^{(\lambda _1 +\delta )} \exp \left( \tilde{C} \rho ^{\alpha} e^{(\delta-\theta \alpha) \min\{ j, n-j\}}\right) \\
&= e^{(\lambda _1 +\delta )n}   \exp \left( \tilde{C} \rho ^{\alpha} \sum _{j=0}^{n-1} e^{( \delta-\theta \alpha) \min\{ j, n-j\})}\right).
\end{split}
\end{displaymath}
Now, using the fact that $\delta-\theta \alpha <0$ and making $c=\exp \left( \tilde{C} \sum _{j=0}^{\infty} 2e^{( \delta-\theta \alpha) j}\right)$ we get the first claim of the lemma. The second one follows from the previous one observing that $x\in \Reg _{\delta ,N}$ and $\norm{A^n(p)}\leq K_{\delta}(x)\norm{A^n(p)}_{f^n(x)\leftarrow x}$. To conclude the proof it only remains to observe that
$$\lambda _1(p)\leq \frac{1}{n}\log (\norm{A^n(p)})$$
which combined with the previous inequality implies 
\begin{displaymath}
\lambda _1(p)\leq \lambda _1 +\delta + \frac{1}{n} \log (cNe^{\rho ^{\alpha}}).
\end{displaymath}
Since $\rho$ may be taken arbitrary small the lemma follows.
\end{proof}

Proposition \ref{prop: apr largest exponent} now follows easily from these two lemmas.

\subsection{Approximation of the infinite Lyapunov exponent}

In this subsection we prove that even when the Lyapunov exponents are not finite we still can approximate it by Lyapunov exponents on periodic points. This will follow from the next general proposition. 

\begin{proposition} \label{prop: appr infinite Lyap}
Let $f:M\to M$ be a continuous map defined on the compact metric space $(M,d)$, $\mu$ an ergodic $f$-invariant probability measure and $B:M\to M(d,\R)$ a continuous map such that $\lambda _1(B,\mu)=-\infty$. If $\{\mu _j \}_{j\in \N}$ is sequence of ergodic $f$-invariant probability measures converging in the weak$^{\ast}$ topology to $\mu$, then
\begin{displaymath}
	\limsup _{j\to \infty} \lambda _1(B,\mu _j) =-\infty.
\end{displaymath}
\end{proposition}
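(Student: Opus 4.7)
The plan is to use the subadditive characterization of the top Lyapunov exponent coming from Kingman's subadditive ergodic theorem: for any ergodic $f$-invariant probability measure $\nu$,
\[
\lambda_1(B,\nu)=\inf_{n\geq 1}\frac{1}{n}\int \log\norm{B^n(x)}\,d\nu(x).
\]
Here the sequence $\phi_n(x)=\log\norm{B^n(x)}$ is subadditive along orbits, and $\phi_n^+$ is integrable since $B$ is continuous on the compact space $M$ and therefore bounded. In particular we get the one-sided bound $\lambda_1(B,\nu)\leq \frac{1}{n}\int\log\norm{B^n}\,d\nu$ for every $n\geq 1$ and every ergodic $\nu$.

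First I would fix an arbitrary $M>0$ and, using the hypothesis $\lambda_1(B,\mu)=-\infty$, choose $n_0\in \N$ with $\frac{1}{n_0}\int \log\norm{B^{n_0}}\,d\mu <-M$. Since $f$ and $B$ are continuous and $M$ is compact, the product $B^{n_0}(x)=B(f^{n_0-1}(x))\cdots B(x)$ depends continuously on $x$, so $\norm{B^{n_0}(\cdot)}$ is continuous and bounded, and $\log\norm{B^{n_0}(\cdot)}$ is continuous as a map into $[-\infty,\infty)$; in particular it is upper semi-continuous and bounded above.

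The heart of the argument is transferring the estimate from $\mu$ to the $\mu_j$ via weak$^\ast$ convergence. Because $\log\norm{B^{n_0}}$ can equal $-\infty$ on a set of positive $\mu$-measure (which is precisely what allows $\lambda_1(B,\mu)=-\infty$), the usual Portmanteau statement for bounded continuous test functions does not apply directly. I would get around this by approximating from above: for each $K>0$ the function
\[
\psi_K(x):=\max\{\log\norm{B^{n_0}(x)},\,-K\}
\]
is bounded and continuous, so weak$^\ast$ convergence gives $\int\psi_K\,d\mu_j\to \int\psi_K\,d\mu$ as $j\to\infty$. Since $\log\norm{B^{n_0}}\leq \psi_K$ pointwise,
\[
\limsup_{j\to\infty}\frac{1}{n_0}\int\log\norm{B^{n_0}}\,d\mu_j \leq \frac{1}{n_0}\int \psi_K\,d\mu,
\]
and monotone convergence as $K\to\infty$ yields $\frac{1}{n_0}\int \psi_K\,d\mu\downarrow \frac{1}{n_0}\int \log\norm{B^{n_0}}\,d\mu <-M$.

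Combining this with Kingman's upper bound $\lambda_1(B,\mu_j)\leq \frac{1}{n_0}\int \log\norm{B^{n_0}}\,d\mu_j$ applied to each ergodic $\mu_j$, I would conclude $\limsup_{j\to\infty}\lambda_1(B,\mu_j)\leq -M$, and then letting $M\to\infty$ finishes the proof. The only delicate step is the extended-real upper semi-continuity of $\log\norm{B^{n_0}}$ and the truncation argument used to handle it; once that is in place, the rest is a straightforward combination of Kingman's theorem with weak$^\ast$ convergence of bounded continuous test functions.
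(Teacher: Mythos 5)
Your proposal is correct and follows essentially the same route as the paper's proof: the characterization $\lambda_1(B,\nu)=\inf_n \frac1n\int\log\norm{B^n}\,d\nu$ from Kingman's theorem, truncation of the possibly $-\infty$-valued continuous function to obtain bounded continuous test functions for weak$^\ast$ convergence, and monotone convergence to remove the truncation. Your version is slightly streamlined (fixing a single $n_0$ with $\frac{1}{n_0}\int\log\norm{B^{n_0}}\,d\mu<-M$ instead of juggling the interchange of $\inf_n$, $\inf_m$ and $\limsup_j$ as the paper does), but the underlying argument is the same.
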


\begin{proof}
For each $n\in\N$, let $\varphi _n :M\to [-\infty ,+\infty )$ be the map given by
\begin{displaymath}
	\varphi _n (x)=\dfrac{1}{n}\log \norm{B^n(x)}.
\end{displaymath}
We start observing that, since $\mu$ is ergodic, $\varphi _1^+ (x)=\max \{0,\varphi _1(x)\} \in \mbox{L}^1(\mu)$ and $\{n\varphi _n\}_{n\in \N}$ is a subadditive sequence, it follows by Kingman's Subadditive Theorem (see for instance \cite{LLE}) that
\begin{displaymath}
	\lambda (B,\mu)=\lim _{n\to \infty} \varphi _n(x)=\inf _n \int \varphi _n (x)d\mu
\end{displaymath}
for $\mu$ almost every $x\in M$.
Analogously, since for each $j\in \N$ the measure $\mu _j$ is ergodic and $\varphi _1^+ (x) \in \mbox{L}^1(\mu _j)$, we have
\begin{displaymath}
	\lambda (B,\mu _j)=\inf _n \int \varphi _n (x)d\mu _j.
\end{displaymath}
Therefore, in order to complete the proof it is enough to show that
\begin{equation*}
\limsup _{j\to +\infty} \inf _n \int \varphi _n (x)d\mu _j=-\infty.
\end{equation*}

Given $m\in \N$, let us consider $\varphi _{n,m}:M\to (-\infty, +\infty)$ given by
\begin{displaymath}
	\varphi _{n,m}(x)=\left\{\begin{array}{cc}
		\varphi _n(x) & \mbox{if} \quad \varphi _n(x)\geq -m\\
		-m & \mbox{if}\quad \varphi _n(x)< -m.
	\end{array}
	\right.
\end{displaymath}
It follows easily from the definition and from the properties of $\varphi _n$ that, for every $m,n\in \N$, $\varphi _{n,m}:M\to (-\infty, +\infty)$ is a continuous function. Moreover, 
\begin{displaymath}
\varphi _{n,m}(x)\geq \varphi _{n,m+1}(x) \quad \mbox{and} \quad \varphi _{n,m}(x)\xrightarrow{m\to +\infty} \varphi _{n}(x)
\end{displaymath}
for every $m,n\in \N$ and $x\in M$. 

By the Monotone Convergence Theorem we get
\begin{displaymath}
\int 	\varphi _n d\mu = \lim _{m\to \infty} \int \varphi _{n,m} d\mu =\inf _{m} \int \varphi _{n,m} d\mu.
\end{displaymath}
On the other hand, since $\mu _j \xrightarrow{w^{\ast}}\mu$ as $j$ goes to infinite and $\varphi _{n,m}$ is continuous, we have
\begin{displaymath}
	\int \varphi _{n,m}d\mu =\lim _{j\to \infty} \int \varphi _{n,m}d\mu _j \quad \forall m\in \N.
\end{displaymath}
Thus,
\begin{equation*}
\inf _n \int \varphi _n d\mu =\inf _n \left\{\inf _m \left[ \lim _{j\to \infty} \int \varphi _{n,m} d\mu _j \right]\right\}.
\end{equation*}
Now, for each $j$ and $m$ in $\N$ we have $\int \varphi _{n,m} d\mu _j \geq \inf _m\left\{ \int \varphi _{n,m} d\mu _j\right\}$ and thus
\begin{displaymath}
	\lim _{j\to \infty}\int \varphi _{n,m} d\mu _j \geq \limsup _{j\to \infty} \inf _m\left\{ \int \varphi _{n,m} d\mu _j\right\}
\end{displaymath}
for every $m\in \N$. Consequently,
\begin{equation}
\inf _m \left\{ \lim _{j\to \infty} \int \varphi _{n,m} d\mu _j \right\} \geq  \limsup _{j\to \infty} \left\{ \inf _m \int \varphi _{n,m} d\mu _j \right\}.
\label{eq: infinite LE estimative 1}
\end{equation}
Moreover, once again by the Monotone Convergence Theorem, 
\begin{equation*}
 \limsup _{j\to \infty} \left\{ \inf _m \int \varphi _{n,m} d\mu _j \right\}= \limsup _{j\to \infty}  \int \varphi _n d\mu _j.
\end{equation*}
Observing then that, as in \eqref{eq: infinite LE estimative 1},
\begin{equation*}
\inf _n \left\{ \limsup _{j\to \infty}  \int \varphi _n d\mu _j\right\} \geq \limsup _{j\to \infty} \inf _n \int \varphi _n d\mu _j
\end{equation*}
it follows that 
\begin{displaymath}
	-\infty =\lambda (B,\mu)=\inf _n \int \varphi _n d\mu \geq \limsup _{j\to \infty} \inf _n \int \varphi _n d\mu _j
\end{displaymath}
as we want.

\end{proof}

\subsection{Conclusion of the proof}
To complete the proof of our main result the idea is to apply Propositions \ref{prop: apr largest exponent} and \ref{prop: appr infinite Lyap} to the cocycle induced by $A$ on a suitable exterior power, which by now is a very standard trick.

We may assume without loss of generality that $\mu$ is non-atomic. Otherwise the theorem is trivial. Moreover, we assume $\lambda _l=-\infty$. In the case when $\lambda _l>-\infty$ we only need the first part of our argument.
Recall that $\gamma _1\geq \gamma _2\geq \ldots \geq \gamma _d$ are the Lyapunov exponents of $A$ with respect to $\mu$ counted with multiplicities and, for every $i \in \{1,\ldots , d\}$, let $\Lambda ^i(\mathbb{R}^d)$ be the $i$th exterior power of $\mathbb{R}^d$ which is the space of alternate $i$-linear forms on the dual $(\mathbb{R}^d)^*$ and $\Lambda ^iA(x):\Lambda ^i(\mathbb{R}^d) \to \Lambda ^i(\mathbb{R}^d)$ the cocycle induced by $A(x)$ on the $i$th exterior power. A very well known fact about this cocycle (see for instance \cite{LLE}) is that its Lyapunov exponents are  
\begin{equation*} \label{Lyapunov exterior power}
\{\gamma _{j_1} +\ldots +\gamma _{j_i}; \; 1\leq j_1<\ldots < j_i\leq d\}.
\end{equation*}
In particular, its largest Lyapunov exponent is given by $\gamma_1+\gamma _2+\ldots+\gamma _i$.

Let $N$ be large enough so that the intersection $G$ of the sets $\Reg _{\delta, N}$ associated to all the cocycles $\Lambda ^iA$ for $i=1,\ldots , \text{dim}(E^{1,A}_x\oplus \ldots \oplus E^{l-1,A}_x)$ have positive measure, that is, $\mu (G)>0$. Let 
\begin{displaymath}
	B(\mu)=\left\{ x\in M; \; \dfrac{1}{n}\sum _{i=0}^{n-1}\delta _{f^i(x)}\xrightarrow{n\to \infty} \mu \quad \mbox{in the weak$^{\ast}$ topology} \right\}
 \end{displaymath}
be the \emph{basin} of $\mu$. Since $\mu$ is ergodic, $B(\mu)$ has full measure. Let $x\in B(\mu)\cap G$ be so that $\mu (B(x,\frac{1}{k})\cap G)>0$ for every $k\in \mathbb{N}$ where $B(x,\frac{1}{k})$ is the ball of radius $\frac{1}{k}$ centered at $x$. By Poincar\'e's Recurrence Theorem there exists a sequence $(n_k)_{k\in \mathbb{N}}$ of positive integers so that $n_k\to +\infty$ and $f^{n_k}(x)\in B(x,\frac{1}{k})\cap G$ for each $k\in \mathbb{N}$. By the Anosov Closing property it follows that, for each $k$ sufficiently large, there exists a periodic point $p_k$ of period $n_k$ so that
\begin{equation}\label{eq: Anosov closing 2}
d(f^j(x),f^j(p_k))\leq C_1 e^{-\theta \min\lbrace j, n_k-j\rbrace}d(f^{n _k}(x),x)\leq \frac{C_1}{k} e^{-\theta \min\lbrace j, n_k-j\rbrace}
\end{equation}
for every $j=0,1,\ldots , n_k$. It follows then by Proposition \ref{prop: apr largest exponent} applied to the cocycles $\Lambda ^iA$ for $i=1,\ldots , \text{dim}(E^{1,A}_x\oplus \ldots \oplus E^{l-1,A}_x)$ that for every $\delta>0$ small, there exists $k_{\delta}\in \mathbb{N}$ so that for any $k\geq k_{\delta}$,
$$\mid (\gamma _1+\gamma_2+\ldots+\gamma_i)-(\gamma _1 (p_k)+\gamma_2(p_k)+\ldots+\gamma_i(p_k))\mid <\delta$$
for every $i=1,\ldots , \text{dim}(E^{1,A}_x\oplus \ldots \oplus E^{l-1,A}_x)$ where $\{\gamma _j(p_k)\}_{j=1}^d$ are the Lyapunov exponents of $A$ at the periodic point $p_k$ counted with multiplicities. In particular,
\begin{equation}\label{eq: appr finite Lyap}
 \lim _{k\to +\infty} \gamma _i(p_k)=\gamma _i
\end{equation}
for every $i=1,\ldots , \text{dim}(E^{1,A}_x\oplus \ldots \oplus E^{l-1,A}_x)$. To conclude the proof of our main result it remains to observe that \eqref{eq: appr finite Lyap} also holds for $i=\text{dim}(E^{1,A}_x\oplus \ldots \oplus E^{l-1,A}_x)+1,\ldots ,d$. But this follows easily from Proposition \ref{prop: appr infinite Lyap} applied to $\Lambda ^iA$ observing that $\lambda_1 (\Lambda ^iA,\mu)=-\infty$ for every $i>\text{dim}(E^{1,A}_x\oplus \ldots \oplus E^{l-1,A}_x)$ and that the sequence $(\mu _{p_k})_{k\in \mathbb{N}}$ of ergodic \emph{periodic measures} given by 
$$\mu _{p_k} =\dfrac{1}{n_k}\sum _{j=0}^{n_k-1}\delta _{f^j(p_k)}$$
converges to $\mu$ in the weak$^*$ topology which follows from the fact that $x\in B(\mu)$ and \eqref{eq: Anosov closing 2}. Consequently,
\begin{equation*}
 \lim _{k\to +\infty} \gamma _i(p_k)=\gamma _i
\end{equation*}
for every $i=1,\ldots , d$ completing the proof of Theorem \ref{theo: main}.

% You may incorporate your references as follows in your main tex file.
% Using BibTex is not recommended but can be handled.

\end{document}